\numberwithin{equation}{section}
\theoremstyle{plain}
\newtheorem{theorem}{Theorem}[section]
\newtheorem{corollary}[theorem]{Corollary}
\newtheorem{proposition}[theorem]{Proposition}
\newtheorem{lemma}[theorem]{Lemma}
\theoremstyle{definition}
\newtheorem{definition}[theorem]{Definition}
\theoremstyle{remark}
\newtheorem{remark}[theorem]{Remark}
\newcommand\bep{\ensuremath{\boldsymbol{\epsilon}}}
\newcommand\bepbar{\ensuremath{\overline{\boldsymbol{\epsilon}}}}
\newcommand\bth{\ensuremath{\boldsymbol{\theta}}}
\newcommand\bpi{\ensuremath{\boldsymbol{\pi}}}
\newcommand\bro{\ensuremath{\boldsymbol{\rho}}}
\newcommand\bmu{\ensuremath{\boldsymbol{\mu}}}
\newcommand\bnu{\ensuremath{\boldsymbol{\nu}}}
\newcommand\bOm{\ensuremath{\boldsymbol{\Omega}}}
\newcommand\bom{\ensuremath{\boldsymbol{\omega}}}
\newcommand\bvar{\ensuremath{\boldsymbol{\varphi}}}
\newcommand{\llambda}[3]{\ensuremath{\lambda^{#2}{}_{{#3}{#1}}}}
\newcommand{\llambdatilde}[3]{\ensuremath{\widetilde{\lambda}^{#2}{}_{{#3}{#1}}}}
\newcommand{\GGamma}[3]{\ensuremath{\Gamma^{#2}{}_{{#3}{#1}}}}
\newcommand{\boldpi}[3]{\ensuremath{{\boldsymbol{\pi}}^{#2}{}_{{#3}{#1}}}}
\newcommand{\boldpitilde}[3]{\ensuremath{\widetilde{\boldsymbol{\pi}}^{#2}{}_{{#3}{#1}}}}
\newcommand\restrict[2]{\ensuremath{\left. {#1} \right| \! {#2} }}
\newcommand\codim{\ensuremath{\mathrm{codim}\,}}
\newcommand\rank{\ensuremath{\mathrm{rank}\,}}
\newcommand\sign{\ensuremath{\mathrm{sign}}}
\newcommand\spn{\ensuremath{\mathrm{span}\,}}
\newcommand\XDS{exterior differential system}
\newcommand\XDSIC{exterior differential system with independence condition}
\newcommand\FX{\ensuremath{\mathcal{F}}}
\newcommand\vs{\vskip .2cm}
\begin{document}
\title{Block diagonalisation of four-dimensional metrics}
\author[J.D.E.~Grant]{James~D.E.\ Grant}
\address{James~D.E.\ Grant: \href{http://www.mat.univie.ac.at/home.php}{Fakult{\"a}t f{\"u}r Mathematik} \\
\href{http://www.univie.ac.at/de/}{Universit{\"a}t Wien} \\ Nordbergstrasse 15 \\ 1090 Wien \\ Austria}
\email{\href{mailto:james.grant@univie.ac.at}{james.grant@univie.ac.at}}
\author{J.A.\ Vickers}
\address{J.A.\ Vickers: \href{http://www.soton.ac.uk/maths/index.shtml}{School of Mathematics}, \href{http://www.soton.ac.uk/}{University of Southampton} \\ Southampton SO17 1BJ \\ UK}
\email{\href{mailto:jav@maths.soton.ac.uk}{jav@maths.soton.ac.uk}}
\thanks{This work was partially supported by START-project Y237--N13 of the \href{http://www.fwf.ac.at/}{Austrian Science Fund}.}
\subjclass[2000]{Primary 58A15; Secondary 53B99}
\keywords{Exterior differential systems, special coordinate systems}

\begin{abstract}
It is shown that, in $4$-dimensions, it is possible to introduce
coordinates so that an analytic metric locally takes block diagonal
form. i.e. one can find coordinates such that $g_{\alpha\beta} = 0$ for
$(\alpha, \beta) \in S$ where $S = \left\{ (1, 3), (1, 4), (2, 3), (2, 4)
\right\}$. We call a coordinate system in which the metric takes this form
a \lq doubly biorthogonal coordinate system\rq. We show that all such
coordinate systems are determined by a pair of coupled second-order partial
differential equations.
\end{abstract}
\maketitle
\thispagestyle{empty}

\section{Introduction}
\label{sec:intro}

This paper is concerned with making coordinate choices to put general
metrics into simplified or canonical forms. A metric in $2$-dimensions
depends upon $\tfrac{1}{2} \times 2 (2 + 1) = 3$ arbitrary functions
$g_{11}$, $g_{12}$ and $g_{22}$. On the other hand, the diffeomorphism
freedom
\begin{align*}
f \colon \mathbb{R}^2 &\rightarrow \mathbb{R}^2
\\
\left( x, y \right) &\mapsto \left( f_1(x, y), f_2(x, y) \right)
\end{align*}
contains $2$ arbitrary functions. Given any $2$-dimensional metric, one
would therefore expect to be able to introduce local coordinates such that
the metric depended on only $3 - 2 = 1$ function. Indeed, it is a classical
result that, in two dimensions, every metric is (locally) conformally flat,
i.e. there exist coordinates so that
\[
ds^2 = \Omega^2(x, y) \left( dx^2 + dy^2 \right).
\]
The proof for analytic metrics goes back to Gauss~\cite{Gauss}, while the
proof for smooth metrics is more recent (see for example~\cite{Spivak} for
details).

In $3$-dimensions the metric depends upon $\tfrac{1}{2} \times 3 (3 + 1) =
6$ arbitrary functions, while the diffeomorphism freedom $f \colon \mathbb{R}^3
\rightarrow \mathbb{R}^3$ involves $3$ functions. One would therefore
expect to be able to introduce coordinates such that a $3$-dimensional
metric was specified by $6 - 3 = 3$ functions. In fact in $3$-dimensions
one can introduce coordinates that locally diagonalise the
metric. i.e. there exist coordinates such that
\[
ds^2 = A(x, y, z) dx^2 + B(x, y, z) dy^2 + C(x, y, z) dz^2.
\]
Again the proof of this result in the analytic case goes back a long
way~\cite{Cartan}. The proof in the smooth case was, again, much more
recent~\cite{DeTY} and uses the theory of the characteristic variety of an
exterior differential system.

In $4$-dimensions the metric depends upon $\tfrac{1}{2} \times 4(4+1) = 10$
arbitrary functions, while the diffeomorphism freedom $f \colon \mathbb{R}^4
\rightarrow \mathbb{R}^4$ gives $4$ functions. One would therefore expect
to be able to write a $4$-dimensional metric in a canonical form that
depended upon $10 - 4 = 6$ arbitrary functions. Thus, in general, one
cannot expect to be able to diagonalise a metric in $4$-dimensions,
although of course in special cases this is possible (this problem was
considered in~\cite{Tod}). However it was suggested to one of us by David
Robinson that an appropriate local canonical form for $4$-dimensional
metrics was the \lq block diagonal\rq\ form
\[
g_{\alpha\beta} =
\left(\begin{array}{c c c c}
A & B & 0 & 0 \\
B & C & 0 & 0 \\
0 & 0 & D & E \\
0 & 0 & E & F
\end{array}\right).
\]
In this paper we use Cartan's theory of exterior differential systems to
show that it is indeed possible to write an analytic $4$-dimensional metric
in this form, at least locally. We show that the problem of finding local
coordinates that block-diagonalise a metric may be reformulated as a
condition on an orthonormal tetrad (see equation~(\ref{dep})). From this
reformulation, we construct an exterior differential system on the
orthonormal frame bundle of our manifold, the integral manifolds of which give rise
to solutions of our block-diagonalisation problem.  This exterior
differential system is not involutive, however, so we must go to the first
prolongation. At this point, we discover a consistency condition for our
system, (\ref{constraint1}), that must be satisfied. Imposing this
constraint on our exterior differential system gives rise to an involutive
Pfaffian system, to which the Cartan--K\"{a}hler theorem may be applied to
show existence of solutions. Note that the consistency condition mentioned
above may be interpreted on our manifold as a relation between a curvature
component and various components of the connection
(cf. equation~(\ref{curvcondition}) for the Riemannian version of this
constraint and equation~(\ref{NPconstraint}) for the Lorentzian version in
Newman--Penrose formalism).  At the level of our four-dimensional manifold,
this constraint may be deduced directly as being a consequence of the
conditions~(\ref{dep}) imposed on the orthonormal tetrad.  The constraint
involves the extrinsic curvature of the two surfaces and does not
impose any additional geometrical restrictions on our manifold. Indeed the
fact that we have a Pfaffian system on the first prolongation which
satisfies the conditions for the Cartan--K\"{a}hler theorem shows that
the block-diagonalisation of \emph{any\/} four-dimensional metric
may be carried out locally.

Although our results for local
canonical forms have assumed that the metric is Riemannian, they remain
true in the Lorentzian case (with obvious modifications). Similarly, we
will assume that our metric is Riemannian, although the proof may easily be
adapted for metrics of Lorentzian or $(-, -, +, +)$ signature. The
Lorentzian version of the $4$-dimensional result is, in particular, useful
in establishing certain results in general relativity. For example, it can
be used to establish some results concerning the geometry of generalised
cosmic strings~\cite{Kini} and can also be used to make a gauge choice
within the $2+2$ formalism~\cite{dIS} in which all the shifts
$\beta^i_\alpha$ vanish.

Given a local canonical form for a metric one can ask what transformations
preserve that form. For the case of a metric in $2$-dimensions a conformal
(in the sense of complex analytic) transformation of the flat metric will
map isothermal coordinates into isothermal coordinates. Similarly in the
$3$-dimensional case the problem is essentially the same as finding all \lq
triply orthogonal coordinate systems\rq\ which are coordinates in which the
flat metric is diagonal. The problem of finding all such coordinate systems
was solved by Darboux~\cite{Darboux}, who showed that it required the
solution of a certain third-order partial differential equation. Similarly
in $4$-dimensions the problem is essentially the same as finding all \lq
doubly biorthogonal coordinate systems\rq\ which are coordinates in which
the flat metric is in block diagonal form. We will show in
Section~\ref{sec:double} that all such coordinates are determined by the
solution of a pair of coupled second-order equations.

\vs
The plan of this paper is as follows. In Section~\ref{sec:3d} we briefly
review the proofs that $3$-dimensional metrics may be diagonalised in both
the analytic and smooth case. In Section~\ref{sec:block} we explain why
these methods fail to give a direct proof of the block diagonalisation of a
$4$-dimensional metric. However, we reduce the problem of block
diagonalising a metric to the problem of constructing an orthonormal tetrad
that satisfies a particular set of identities~\eqref{dep}. In
Section~\ref{sec:XDS} we show, using the theory of exterior differential
systems, that, in the case where the metric is analytic, such an
orthonormal tetrad can always be constructed. As such, we deduce that a
four-dimensional analytic metric can be block-diagonalised. In
Section~\ref{sec:double} we discuss triply orthogonal systems in
$3$-dimensions to motivate the discussion of doubly biorthogonal systems of
coordinates in $4$-dimensions. In order to make the paper reasonably
self-contained, we have collected together the main background material
that we require from the theory of exterior differential systems in Appendix~\ref{app:XDS}.

\vs
\noindent{\textbf{Notation}}: In the earlier sections of this paper, we
will often have cause to refer to a single diagonal component
$g_{\alpha\alpha}$ of a metric. Also, when working with exterior
differential systems, it is sometimes convenient to explicitly write out
the terms in a sum individually, rather than use the summation
convention. Therefore, we will generally \emph{\textbf{not}\/} use the
Einstein summation convention in this paper, with the exception
of Section~\ref{sec:double}, where the above issues do not arise.

\vs Note also that we will use Greek letters for coordinate indices and
Latin letters for frame indices.

\section{Diagonalising metrics in $3$-dimensions}
\label{sec:3d}

In this section, we review the methods of proving that a $3$-dimensional
smooth metric can be diagonalised.

In the analytic case, rather than working with the covariant metric
$g_{\alpha\beta}$ it is more convenient to consider the equivalent problem
of diagonalising the contravariant metric $g^{\alpha\beta}$. Given
$g^{\alpha\beta}(x^1, x^2, x^3)$ we wish to find new coordinates $\{
x^{\alpha'}(x^1, x^2, x^3) : \alpha = 1, 2, 3 \}$ such that
\[
g^{\alpha'\beta'} = \sum_{\gamma, \delta} \frac{\partial
x^{\alpha'}}{\partial x^\gamma} \frac{\partial x^{\beta'}}{\partial
x^\delta} g^{\gamma\delta} = 0 \quad \hbox{ for } \alpha' \neq \beta'.
\]
This is a non-linear system of $3$ equations (taking $(\alpha', \beta')$ to
be $(1, 2)$, $(1, 3)$ and $(2, 3)$) for three unknowns $x^{1'}$, $x^{2'}$
and $x^{3'}$. In the analytic case one can show that solutions to these
equations exist but the solutions are not unique (there are trivial
transformations given by replacing $x^{1'}$, $x^{2'}$ and $x^{3'}$ with
$h^1(x^{1'})$, $h^2(x^{2'})$ and $h^3(x^{3'})$) and the strongly non-linear
nature of the equations makes it hard to utilise this method in the smooth
case. Instead, DeTurck and Yang~\cite{DeTY} seek an orthonormal coframe
${\bep}^1$, ${\bep}^2$, ${\bep}^3$, and a coordinate system $x^1$, $x^2$,
$x^3$ such that
\begin{equation}
{\bep}^i = f^i dx^i, \qquad i = 1, 2, 3.
\label{1}
\end{equation}
(Recall \textbf{no summation}.) Clearly such a frame would imply that
$g_{\mu\nu}$ is diagonal in the coordinate system of the $x^\mu$.

The advantage of condition~\eqref{1} is that, by the Frobenius theorem, it
is (locally) equivalent to the existence of a coframe such that
\begin{equation}
{\bep}^i \wedge d{\bep}^i = 0, \qquad i = 1, 2, 3
\label{2}
\end{equation}
and this is a problem that may be solved without having to consider
coordinate transformations. Furthermore, one would expect the ${\bep}^i$ to
be unique (up to relabelling) since the lack of uniqueness in the
coordinates noted above is absorbed into the $f^i$.

Let $\{ {\bepbar}^i \}$ be some fixed orthonormal frame for
$g_{\alpha\beta}$ in some open set. Then, since ${\bep}^i$ and
${\bepbar}^i$ are both orthonormal, they are related by some
$\mathrm{SO}(3)$ transformation $a^i{}_j$
\begin{equation}
{\bep}^i(x) = \sum_j a^i{}_j(x) \, {\bepbar}^j(x).
\label{3}
\end{equation}
We now substitute~\eqref{3} into~\eqref{2} to obtain
\begin{equation}
\sum_{j, k} a^i{}_j \, {\bepbar}^j \wedge d \left( a^i{}_k \, {\bepbar}^k
\right) = 0, \qquad i = 1, 2, 3.
\label{4}
\end{equation}
Note that this gives $3$ equations for $3$ unknowns (such as the Euler
angles) which parameterise elements of $\mathrm{SO}(3)$. To show that there
exist solutions to~\eqref{4}, DeTurck and Yang write the second term as
\[
d \left(a^i{}_k \, {\bepbar}^k \right) =
\sum_l \left( a^i{}_{k|l} \, {\bepbar}^l \wedge {\bepbar}^k + a^i{}_k \, d{\bepbar}^k \right)
\]
(where $f_{|i} = \overline{\mathbf{e}}_i \left( f \right)= \sum_\mu
\overline{e}_i{}^\mu \tfrac{\partial f}{\partial x^{\mu}}$, with
$\overline{\mathbf{e}}_i$ the dual basis to ${\bepbar}^i$). They then use
Cartan's first structure equation to write
\[
d{\bepbar}^k = \sum_{l, m} \overline{\gamma}^k{}_{lm} \, {\bepbar}^l
\wedge {\bepbar}^m,
\]
where $\overline{\gamma}^k{}_{ml}$ are the connection coefficients with
respect to the frame ${\bepbar}^i$. (Our conventions are that $d{\bep}^i =
- \Gamma^i{}_j \wedge {\bep}^j$ with $\Gamma^i{}_j = \sum_k
\gamma^i{}_{jk} {\bep}^k$.)

Substituting in~\eqref{4} gives
\[
\sum_{\sigma \in \Sigma_3} \sum_{j, k, l, m} \left( \sign\, \sigma \right)
a^i{}_{\sigma(j)}
\left( a^i{}_{\sigma(k)|\sigma(l)} + a^i{}_m
\overline{\gamma}^m{}_{\sigma(k)\sigma(l)} \right) = 0,
\qquad i = 1, 2, 3.
\]
One can then solve for $a^i{}_{k|l}$ and show that the resulting system is
diagonal hyperbolic (a special case of symmetric hyperbolic). In the smooth
case one has existence and uniqueness theorems for such systems of
equations (see e.g.~\cite{TaylorPsiD}), so that one can show the existence
of a unique (up to relabelling) orthonormal frame satisfying~\eqref{1} and
hence a diagonal metric. Note however that as remarked earlier the coordinate
expression~\eqref{1} is not unique, but one is free to replace $x^1$ by
$h(x^1)$ etc, so the actual diagonal entries of the metric are not unique.

\section{Block diagonalisation of $4$-dimensional metrics}
\label{sec:block}

In this section and the next, we shall show that it is possible, in the
analytic case, to introduce coordinates that block-diagonalise a
$4$-dimensional metric. The proof will eventually be by an application of
the Cartan--K\"{a}hler theorem, a generalisation of the Cauchy--Kovalevskya
theorem~\cite{BCGGG}. However, we shall begin by trying to repeat the
methods for diagonalising analytic metrics in $3$-dimensions.

Given $g^{\alpha\beta}(x^1, x^2, x^3, x^4)$, we want to find new
coordinates $\{ x^{\alpha'}(x^1, x^2, x^3, x^4): \alpha=1, \dots, 4 \}$
such that
\[
g^{\alpha'\beta'} =
\sum_{\gamma, \delta} \frac{\partial x^{\alpha'}}{\partial x^\gamma}
\frac{\partial x^{\beta'}}{\partial x^\delta} g^{\gamma\delta} = 0
\quad \hbox{ for } (\alpha', \beta') \in S,
\]
where $S=\{(1, 3), (1, 4), (2, 3), (2, 4)\}$. This gives $4$ equations for
$4$ unknowns.

For ease of notation, we let $x^{\alpha'}(x^1, x^2, x^3, x^4) =
y^{\alpha}(x^1, x^2, x^3, x^4) = y^\alpha(x^\beta)$. We now linearise about
$y_o^\alpha(x^\beta)$ and obtain
\[
\sum_{\gamma, \delta} \left( y^\alpha_{, \gamma} y^\beta_{o, \delta}
+ y^\beta_{, \gamma} y^\alpha_{o, \delta} \right) g^{\gamma\delta}
= - \sum_{\gamma, \delta} y^\alpha_{o, \gamma} y^\beta_{o, \delta}
g^{\gamma\delta} \quad
\hbox{ for } (\alpha, \beta) \in S.
\]
This is a system of the form
\[
P^\alpha \frac{\partial}{\partial x^\alpha}{\mathbf{y}} = \mathbf{c},
\]
where
\begin{align*}
P^\alpha&=\left(\begin{array}{c c c c}
y_o^{3, \alpha} & 0 & y_o^{1, \alpha} & 0 \\
y_o^{4, \alpha} & 0 & 0 & y_o^{1, \alpha} \\
0 & y_o^{3, \alpha} & y_o^{2, \alpha} & 0 \\
0 & y_o^{4, \alpha} & 0 & y_o^{2, \alpha}
\end{array}\right),
\\
{}&{}\\
\mathbf{c}&=\left(\begin{array}{c c c c }
-y^1_{o, \alpha}y^3_{o, \beta}g^{\alpha\beta} \\
-y^1_{o, \alpha}y^4_{o, \beta}g^{\alpha\beta} \\
-y^2_{o, \alpha}y^3_{o, \beta}g^{\alpha\beta} \\
-y^2_{o, \alpha}y^4_{o, \beta}g^{\alpha\beta}
\end{array}\right),
\end{align*}
and $y_o^{\beta, \alpha}=y^\beta_{o, \gamma}g^{\alpha\gamma}$.

Unfortunately, when one attempts to find the characteristic surfaces, one finds
\[
\det( P^\alpha \xi_\alpha ) = 0, \qquad \forall \xi_\alpha \in \mathbb{R}^4,
\]
so that there are no non-characteristic surfaces and the initial data must
satisfy some constraint. As a result, one cannot directly apply the
Cauchy--Kovalevskya theorem, unlike in the apparently similar problem of
diagonalising a metric in $3$-dimensions.

\vs We therefore turn to the method of DeTurck and Yang. In this case, this
involves finding a coframe $\{ {\bep}^i \}$ and a coordinate system such
that
\begin{subequations}
\begin{align}
{\bep}^1 \wedge {\bep}^2 &= f dx^1 \wedge dx^2,
\label{3.1a}
\\
{\bep}^3 \wedge {\bep}^4 & = g dx^3 \wedge dx^4.
\label{3.1b}\end{align}\end{subequations}
Note that~\eqref{3.1a} implies
\begin{equation}
{\bep}^i = \sum_{\mu = 1, 2} {\bep}^i_\mu dx^\mu \qquad i = 1, 2
\label{3.2a}
\end{equation}
and that~\eqref{3.1b} implies
\begin{equation}
{\bep}^i = \sum_{\mu = 3, 4} {\bep}^i_\mu dx^\mu \qquad i = 3, 4
\label{3.2b}
\end{equation}
and hence $g_{\mu\nu} = \sum_{i, j} \delta_{ij} {\bep}^i_\mu {\bep}^j_\nu$
is block diagonal. Conversely, if $g_{\mu\nu}$ is block diagonal, we can
certainly find a coframe that satisfies~\eqref{3.2a} and~\eqref{3.2b} and
hence~\eqref{3.1a} and~\eqref{3.1b}.

\vs This leads to the following characterisation of metrics that can be
block-diagonalised:

\begin{proposition}
\label{prop:1}
A Riemannian metric $\mathbf{g}$ can be block-diagonalised if and only if
it admits an orthonormal coframe, $\{ {\bep}^a: a = 1, \dots, 4 \}$, that
satisfies the relations
\begin{equation}
\begin{aligned}
{\bep}^1 \wedge {\bep}^2 \wedge d{\bep}^1 &= 0,
\\
{\bep}^1 \wedge {\bep}^2 \wedge d{\bep}^2 &= 0,
\\
{\bep}^3 \wedge {\bep}^4 \wedge d{\bep}^3 &= 0,
\\
{\bep}^3 \wedge {\bep}^4 \wedge d{\bep}^4 &= 0.
\end{aligned}
\label{dep}
\end{equation}
\end{proposition}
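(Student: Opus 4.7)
The plan is to prove both directions by applying the classical Frobenius theorem to the two Pfaffian systems $\{\bep^1,\bep^2\}$ and $\{\bep^3,\bep^4\}$ separately. The four identities in \eqref{dep} decouple into two pairs, and each pair is exactly the Frobenius integrability condition for one of these rank-two systems.

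For the ``only if'' direction, suppose $g_{\mu\nu}$ is block diagonal in coordinates $(x^1,\dots,x^4)$. Performing Gram--Schmidt orthonormalisation within each $2\times 2$ block produces orthonormal 1-forms $\bep^1,\bep^2\in\spn(dx^1,dx^2)$ and $\bep^3,\bep^4\in\spn(dx^3,dx^4)$, exactly as in~\eqref{3.2a}--\eqref{3.2b}. Then, for $i=1,2$, the 2-form $d\bep^i$ contains no $dx^3\wedge dx^4$ component, so wedging with $\bep^1\wedge\bep^2$ (a nonzero multiple of $dx^1\wedge dx^2$) produces a 4-form that must vanish; this gives the first pair of identities, and the remaining two follow symmetrically.

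For the converse, the main observation is that in four dimensions the condition $\bep^1\wedge\bep^2\wedge d\bep^i=0$ is equivalent to $d\bep^i$ lying in the algebraic ideal generated by $\bep^1,\bep^2$: expanding $d\bep^i$ in the basis of 2-forms associated with the coframe $\{\bep^a\}$, the hypothesis exactly kills its $\bep^3\wedge\bep^4$ component. The first two equations of~\eqref{dep} therefore make $\{\bep^1,\bep^2\}$ a Frobenius-integrable Pfaffian system, so locally there exist functions $x^1,x^2$ with $\spn(dx^1,dx^2)=\spn(\bep^1,\bep^2)$. The same argument applied to the last two equations yields functions $x^3,x^4$ with $\spn(dx^3,dx^4)=\spn(\bep^3,\bep^4)$.

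The one remaining point, which is the only real (and minor) obstacle, is to verify that $(x^1,\dots,x^4)$ is a bona fide local coordinate system, i.e.\ that $dx^1\wedge dx^2\wedge dx^3\wedge dx^4\neq 0$. This is immediate: $\bep^1\wedge\bep^2\wedge\bep^3\wedge\bep^4$ is the nowhere-vanishing volume form of $\mathbf{g}$, and by the two Frobenius conclusions it equals a nonzero scalar multiple of $dx^1\wedge dx^2\wedge dx^3\wedge dx^4$, so the latter is nowhere zero. With the chart in hand, substituting the decompositions $\bep^1,\bep^2\in\spn(dx^1,dx^2)$ and $\bep^3,\bep^4\in\spn(dx^3,dx^4)$ into $\mathbf{g}=\sum_i(\bep^i)^2$ eliminates every cross term $dx^\mu\,dx^\nu$ with $\mu\in\{1,2\}$ and $\nu\in\{3,4\}$, which is precisely the block-diagonal form.
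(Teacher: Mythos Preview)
Your proof is correct and follows essentially the same approach as the paper's: both directions rest on the Frobenius theorem applied to the two rank-two Pfaffian systems $\{\bep^1,\bep^2\}$ and $\{\bep^3,\bep^4\}$, and the converse constructs the coframe directly from the block-diagonal coordinate expression. Your write-up simply supplies more detail than the paper's terse version --- in particular the explicit verification that \eqref{dep} is the Frobenius condition (via the vanishing of the $\bep^3\wedge\bep^4$ component of $d\bep^i$) and the check that the four functions produced by the two separate applications of Frobenius together form a genuine chart.
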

\begin{proof}
Given a coframe that obeys relations~\eqref{dep}, the Frobenius theorem
implies the existence of local coordinates $(t, x, y, z)$ and functions
$\alpha, \dots, \theta$ such that
\begin{equation}
\begin{aligned}
{\bep}^1 =
\alpha \, dt + \beta \, dx, \qquad {\bep}^2 = \gamma \, dt + \delta \, dx,
\\
{\bep}^3 = \epsilon \, dy + \zeta \, dz, \qquad {\bep}^4
= \eta \, dy + \theta \, dz.
\end{aligned}
\label{coframe}
\end{equation}
The metric $\mathbf{g}$ is then block-diagonal in this coordinate
system. Conversely, if the metric $\mathbf{g}$ is block-diagonal with
respect to a coordinate system $(t, x, y, z)$, then we can choose a coframe
of the form~\eqref{coframe}, which then automatically
satisfies~\eqref{dep}.
\end{proof}

\begin{remark}
Although we have stated the block-diagonalisation problem in terms of Riemannian manifolds,
it is clear that the problem of block-diagonalising a metric is conformally invariant, In particular,
a coordinate system that block-diagonalises a representative metric in a conformal equivalence class
will block-diagonalise all representatives in that conformal equivalence class.
We will pursue the Riemannian version of the problem for simplicity,
although all of our calculations can be reformulated in a conformally equivariant fashion.
\label{rem:cfl}
\end{remark}

In the next section, the characterisation given in Proposition~\ref{prop:1} will be used to show that all
analytic four-dimensional metrics can be block-diagonalised.

\section{Exterior differential systems}
\label{sec:XDS}

In this section, we use the theory of exterior differential systems, in
particular the Cartan--K\"{a}hler theorem, to show that, for a given
analytic metric $\mathbf{g}$, we can find an orthonormal coframe that
satisfies the conditions~\eqref{dep} of Proposition~\ref{prop:1}. Our
notation, generally, follows that of~\cite{BCGGG}. The methods that we use
are similar to those used in the study of orthogonal coordinates for
Riemannian metrics in Chapter~III, Section~3, Example~3.2, and Chapter~VII,
Section~3 of~\cite{BCGGG}. For completeness, however, a summary of the
relevant terminology and results from exterior differential systems theory
has been included in Appendix~\ref{app:XDS}.

\vskip .2cm Let $X$ be an oriented four-manifold with a Riemannian metric
$\mathbf{g}$, and let $\pi \colon \FX \rightarrow X$ be the bundle of orthonormal
coframes of $(X, \mathbf{g})$. We will denote points in $\FX$ by either $p$
or, since we are working locally, we will assume a trivialisation
$\pi^{-1}(X) \cong X \times \mathrm{SO}(4)$ and denote points in $\FX$ by
$(x, g)$ where $x \in X$ and $g \in \mathrm{SO}(4)$. The bundle $\FX$ comes
equipped with a canonical basis of $1$-forms consisting of the components,
$\{ {\bom}^a \}_{a = 1, \dots, 4}$, of the tautological $1$-form on $\FX$
and the components, $\{ {\bom}^a{}_b \}_{a, b = 1, \dots, 4}$, of the
Levi-Civita connection (see, e.g.,~\cite{IL}). These differential forms
have the following properties:
\begin{itemize}
\item[$\bullet$] \textbf{Reproducing property}: An orthonormal coframe $\{
\bep^a \}_{a=1}^4$ on $M$ defines a corresponding section $f \colon X \rightarrow
\FX$. Pulling back the tautological $1$-forms on $\FX$ by this section
reproduces the coframe $\{ \bep^a \}$ i.e. $f^* {\bom}^a = {\bep}^a$.
\item[$\bullet$] \textbf{Canonical coframing}: A canonical coframing of
$\FX$ consists of the tautological $1$-forms ${\bom}^a$, $a = 1, \dots, 4$
and the connection $1$-forms ${\bom}^a{}_b$, where $a, b = 1, \dots, 4$
with $a < b$. Note that we will often write summations that involve terms
of the form ${\bom}^a{}_b$ with $a > b$. In this case, we identify
${\bom}^a{}_b$ with $- \sum_{c, d} \delta^{ac} \delta_{bd} {\bom}^d{}_c$,
consistent with the $\mathrm{SO}(4)$ nature of the connection. We adopt
similar conventions with quantities such as $\llambda{a}{b}{c}$ introduced
later.
\item[$\bullet$] \textbf{Cartan structure equations}: The one-forms $\{
\bom^a, {\bom}^a{}_b \}$ obey the Cartan structure equations
\begin{align*}
d{\bom}^a + \sum_b {\bom}^a{}_b \wedge {\bom}^b &= 0,
\\
d{\bom}^a{}_b + \sum_c {\bom}^a{}_c \wedge {\bom}^c{}_b &= \mathbf{\Omega}^a{}_b ,
\end{align*}
where
\[
\mathbf{\Omega}^a{}_b = \frac{1}{2} \sum_{c, d} R^a{}_{bcd} \, {\bom}^c \wedge {\bom}^d \in \Omega^2(\FX, \mathfrak{so}(4))
\]
is the curvature form of the connection form $\bom^a{}_b$.
(Recall our convention mentioned above for ${\bom}^a{}_b$ with $a > b$.)
\end{itemize}

\vs Following Proposition~\ref{prop:1}, let $\mathcal{I} \subset
\Omega^*(\FX)$ be the exterior differential system on $\FX$ generated by
the $4$-forms
\begin{equation}
\begin{aligned}
\boldsymbol{\Theta}^1 &:= {\bom}^1 \wedge {\bom}^2 \wedge d{\bom}^1 =
{\bom}^1 \wedge {\bom}^2 \wedge {\bom}^3 \wedge {\bom}^1{}_3 + {\bom}^1 \wedge {\bom}^2 \wedge {\bom}^4 \wedge {\bom}^1{}_4,
\\
\boldsymbol{\Theta}^2 &:= {\bom}^1 \wedge {\bom}^2 \wedge d{\bom}^2 =
{\bom}^1 \wedge {\bom}^2 \wedge {\bom}^3 \wedge {\bom}^2{}_3 + {\bom}^1 \wedge {\bom}^2 \wedge {\bom}^4 \wedge {\bom}^2{}_4,
\\
\boldsymbol{\Theta}^3 &:= {\bom}^3 \wedge {\bom}^4 \wedge d{\bom}^3 =
- {\bom}^1 \wedge {\bom}^3 \wedge {\bom}^4 \wedge {\bom}^1{}_3 - {\bom}^2 \wedge {\bom}^3 \wedge {\bom}^4 \wedge {\bom}^2{}_3,
\\
\boldsymbol{\Theta}^4 &:= {\bom}^3 \wedge {\bom}^4 \wedge d{\bom}^4 =
- {\bom}^1 \wedge {\bom}^3 \wedge {\bom}^4 \wedge {\bom}^1{}_4 - {\bom}^2 \wedge {\bom}^3 \wedge {\bom}^4 \wedge {\bom}^2{}_4.
\end{aligned}
\label{bigTheta}
\end{equation}
(Therefore, $\mathcal{I}$ is the ideal in $\Omega^*(\FX)$ generated,
algebraically, by the $4$-forms $\boldsymbol{\Theta}^i$ and the $5$-forms
$d\boldsymbol{\Theta}^i$.)
We consider the {\XDSIC} $(\mathcal{I}, \bOm)$
on the ten-dimensional manifold $\FX$, where the independence condition is
defined by the $4$-form
\[
\bOm := {\bom}^1 \wedge \dots \wedge {\bom}^4 \in \Omega^4(\FX).
\]

\vs As a result of the previous discussion, we have the following:
\begin{lemma}
Let $U \subseteq X$ is an open set, and $f \colon U \rightarrow \FX$ a section of
$\FX$ that satisfies $f^* {\bvar} = 0$, for all ${\bvar} \in \mathcal{I}$,
and $f^* \bOm \neq 0$ on $U$. Then the $1$-forms ${\bep}^a := f^* {\bom}^a
\in \Omega^1(U)$ define an orthonormal coframe on $U$ that
satisfies~\eqref{dep}.
\end{lemma}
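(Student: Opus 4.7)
The plan is to verify directly that a section $f$ satisfying the stated conditions yields an orthonormal coframe with the properties required by Proposition~\ref{prop:1}, by unpacking the three structural features of $\FX$: its orthonormality (built into the structure group), the reproducing property of the tautological $1$-forms, and the naturality of $d$ and $\wedge$ under pullback.

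First I would establish that $\{\bep^a\}_{a=1}^4$ is genuinely an orthonormal coframe on $U$. Since $\FX$ is the bundle of orthonormal coframes of $(X, \mathbf{g})$, any section $f \colon U \to \FX$ selects, at each point $x \in U$, an orthonormal coframe at $x$; by the reproducing property stated in the excerpt, this coframe is precisely $\{f^* \bom^a\} = \{\bep^a\}$. Thus $\{\bep^a\}$ is pointwise $\mathbf{g}$-orthonormal, and linear independence on $U$ follows from the independence condition, since
\[
f^* \bOm \;=\; f^*(\bom^1 \wedge \bom^2 \wedge \bom^3 \wedge \bom^4) \;=\; \bep^1 \wedge \bep^2 \wedge \bep^3 \wedge \bep^4 \;\neq\; 0.
\]

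Next I would verify the relations \eqref{dep}. Using that $f^*$ commutes with both $d$ and $\wedge$, applying $f^*$ to the generators $\bTh^i$ in \eqref{bigTheta} gives
\[
f^* \bTh^1 \;=\; f^*\bigl(\bom^1 \wedge \bom^2 \wedge d\bom^1\bigr) \;=\; \bep^1 \wedge \bep^2 \wedge d\bep^1,
\]
and analogously $f^* \bTh^2 = \bep^1 \wedge \bep^2 \wedge d\bep^2$, $f^* \bTh^3 = \bep^3 \wedge \bep^4 \wedge d\bep^3$, $f^* \bTh^4 = \bep^3 \wedge \bep^4 \wedge d\bep^4$. Since each $\bTh^i$ lies in $\mathcal{I}$, the hypothesis $f^* \bvar = 0$ for all $\bvar \in \mathcal{I}$ forces each of these pullbacks to vanish, which is precisely the system \eqref{dep}.

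There is no substantive obstacle here: the lemma is essentially a translation step that packages the equivalence between integral manifolds of $(\mathcal{I}, \bOm)$ on $\FX$ and orthonormal coframes on $U$ satisfying \eqref{dep}. The content of the block diagonalisation theorem is entirely deferred to the subsequent Cartan--K\"ahler analysis of $(\mathcal{I}, \bOm)$, which is what actually produces the section $f$.
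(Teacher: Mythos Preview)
Your proposal is correct and matches the paper's approach: the paper gives no explicit proof, stating only that the lemma follows ``as a result of the previous discussion,'' and your argument is precisely the unpacking of that discussion (reproducing property for orthonormality, the independence condition for nondegeneracy, and naturality of pullback applied to the generators $\bTh^i$ for~\eqref{dep}).
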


Let $E_4 \subset T_p \FX$ be a $4$-dimensional integral element of
$(\mathcal{I}, \bOm)$ based at point $p \in \FX$
(i.e. $\restrict{\bvar}{E_4} = 0$, for all ${\bvar} \in \mathcal{I}$ and
$\restrict{\bOm}{E_4} \neq 0$). The space of such integral elements is
denoted by $V_4(\mathcal{I}, \bOm)$, and is a subset of
$\mathrm{Gr}_4(T\FX, \bOm)$, which is the subset of the Grassmannian bundle
$\mathrm{Gr}_4(T\FX)$ consisting of $4$-planes, $E_4$, for which
$\restrict{\bOm}{E_4} \neq 0$. Let $(\mathbf{v}_1, \dots, \mathbf{v}_4)$ be
a basis for $E_4$ which, without loss of generality, we may take to be of
the form
\begin{equation}
\mathbf{v}_a = \frac{\partial}{\partial {\bom}^a}(p)
+ \sum_{b < c} \llambda{a}{b}{c} \frac{\partial}{\partial {\bom}^b{}_c}(p),
\label{va}
\end{equation}
where $\{ \partial/\partial {\bom}^a, \partial / \partial {\bom}^a{}_b \}$
denotes the basis of $T\FX$ dual to $\{ {\bom}^a, {\bom}^a{}_b \}$ (see,
e.g.,~\cite[pp.~253]{Olver} for a discussion of this notation.). Note that
the coordinates $(x, g)$ on $\FX$ along with the parameters
$\left\{ \llambda{a}{b}{c}: a, b, c = 1, \dots, 4; \ b < c \right\}$, give a local
coordinate system on $\mathrm{Gr}_4(T\FX, \bOm)$. The condition that $E_4$
be an integral element of $\mathcal{I}$ is that
$\boldsymbol{\Theta}^i(\mathbf{v}_1, \mathbf{v}_2, \mathbf{v}_3,
\mathbf{v}_4) = 0$ for $i = 1, \dots, 4$. Substituting~\eqref{va}
into~\eqref{bigTheta}, we find that this is equivalent to the conditions
\begin{equation}
\begin{aligned}
\llambda{1}{2}{3} &= \llambda{2}{1}{3}, & \llambda{1}{2}{4} &= \llambda{2}{1}{4},
\\
\llambda{3}{1}{4} &= \llambda{4}{1}{3}, & \llambda{3}{2}{4} &= \llambda{4}{2}{3}.
\end{aligned}
\label{LambdaConditions}
\end{equation}
At each point $p \in \FX$, these equations impose $4$ linear constraints on
the coordinates $\llambda{a}{b}{c}$. It therefore follows that
$V_4(\mathcal{I}, \bOm)$ is a smooth submanifold of $Gr_4(T\FX)$ of
codimension $4$.

We now consider an integral flag $(0)_p \subset E_1 \subset E_2 \subset E_3
\subset E_4 \subset T_p(\FX)$, and wish to calculate the integers $c_k, k =
0, \dots, 4$ (see Definition~\ref{defn:A.3} in Appendix~\ref{app:XDS}). Since $\mathcal{I}$ contains no non-zero
$1$-forms, $2$-forms or $3$-forms, it follows that
\[
c_0 = c_1 = c_2 = 0
\]
and, from its definition, we have
\[
c_4 = \dim \FX - 4 = 6.
\]
Therefore, it only remains to calculate $c_3$. To do this, we first define
the one-forms
\[
{\bpi}^a{}_b := {\bom}^a{}_b(p) - \sum_c \llambda{c}{a}{b} \, {\bom}^c(p) \in T^*_p \FX.
\]
Note that the ${\bpi}^a{}_b$, with $a<b$, span the subspace of $T^*_p(\FX)$
that annihilate the vectors $\mathbf{v}_a$. It then follows that $E_4$ may
be described as
\[
E_4 = \left\{ \mathbf{v} \in T_p \FX: \vphantom{|^|} {\bpi}^a{}_b(\mathbf{v}) = 0, \mbox{ for } a, b = 1, \dots, 4;\ a < b \right\}.
\]
We now note that, by~\eqref{LambdaConditions}, we may write
\begin{align*}
\boldsymbol{\Theta}^1 &= {\bom}^1 \wedge {\bom}^2 \wedge {\bom}^3 \wedge {\bpi}^1{}_3 + {\bom}^1 \wedge {\bom}^2 \wedge {\bom}^4 \wedge {\bpi}^1{}_4,
\\
\boldsymbol{\Theta}^2 &= {\bom}^1 \wedge {\bom}^2 \wedge {\bom}^3 \wedge {\bpi}^2{}_3 + {\bom}^1 \wedge {\bom}^2 \wedge {\bom}^4 \wedge {\bpi}^2{}_4 ,
\\
\boldsymbol{\Theta}^3 &= - {\bom}^1 \wedge {\bom}^3 \wedge {\bom}^4 \wedge {\bpi}^1{}_3 - {\bom}^2 \wedge {\bom}^3 \wedge {\bom}^4 \wedge {\bpi}^2{}_3,
\\
\boldsymbol{\Theta}^4 &= - {\bom}^1 \wedge {\bom}^3 \wedge {\bom}^4 \wedge {\bpi}^1{}_4 - {\bom}^2 \wedge {\bom}^3 \wedge {\bom}^4 \wedge {\bpi}^2{}_4 .
\end{align*}
We let $E_3 := \mathrm{span}\,\{ \mathbf{e}_1, \mathbf{e}_2, \mathbf{e}_3 \} \subset E_4$, where
\[
\mathbf{e}_i = \sum_{a=1}^4 e_i^a \mathbf{v}_a, \qquad i = 1, 2, 3,
\]
and define the quantities
\begin{align*}
A &:=
\left( {\bom}^1 \wedge {\bom}^2 \wedge {\bom}^3 \right) \left(
\mathbf{e}_1, \mathbf{e}_2, \mathbf{e}_3 \right), \qquad
B &:=
\left( {\bom}^1 \wedge {\bom}^2 \wedge {\bom}^4 \right) \left( \mathbf{e}_1, \mathbf{e}_2, \mathbf{e}_3 \right),
\\
C &:=
\left( {\bom}^2 \wedge {\bom}^3 \wedge {\bom}^4 \right) \left(
\mathbf{e}_1, \mathbf{e}_2, \mathbf{e}_3 \right), \qquad
D &:=
\left( {\bom}^1 \wedge {\bom}^3 \wedge {\bom}^4 \right) \left( \mathbf{e}_1, \mathbf{e}_2, \mathbf{e}_3 \right).
\end{align*}
We then wish to consider the polar space
\[
H(E_3) := \left\{ \mathbf{v} \in T_p \FX: \vphantom{|^|}
{\bvar}(\mathbf{v}, \mathbf{e}_1, \mathbf{e}_2, \mathbf{e}_3) = 0,
\forall {\bvar} \in \mathcal{I} \right\}
\]
(see~Definition~\ref{defn:polarspace} in Appendix~\ref{app:XDS}). It follows that $\mathbf{v} \in T_p \FX$ lies in
$H(E_3)$ if and only if
\begin{equation}
\begin{aligned}
\boldsymbol{\Theta}^1 \left( \mathbf{v}, \mathbf{e}_1, \mathbf{e}_2, \mathbf{e}_3 \right)
&= - A {\bpi}^1{}_3(\mathbf{v}) - B {\bpi}^1{}_4(\mathbf{v}) = 0,
\\
\boldsymbol{\Theta}^2 \left( \mathbf{v}, \mathbf{e}_1, \mathbf{e}_2, \mathbf{e}_3 \right)
&= - A {\bpi}^2{}_3(\mathbf{v}) - B {\bpi}^2{}_4(\mathbf{v}) = 0,
\\
\boldsymbol{\Theta}^3 \left( \mathbf{v}, \mathbf{e}_1, \mathbf{e}_2, \mathbf{e}_3 \right)
&= D {\bpi}^1{}_3(\mathbf{v}) + C {\bpi}^2{}_3(\mathbf{v}) = 0,
\\
\boldsymbol{\Theta}^4 \left( \mathbf{v}, \mathbf{e}_1, \mathbf{e}_2, \mathbf{e}_3 \right)
&= D {\bpi}^1{}_4(\mathbf{v}) + C {\bpi}^2{}_4(\mathbf{v}) = 0.
\end{aligned}
\label{linearconstraints}
\end{equation}
Since ${\bpi}^1{}_3, {\bpi}^1{}_4, {\bpi}^2{}_3, {\bpi}^2{}_4$ are
linearly-independent $1$-forms on $\FX$, it follows that the number of
linearly-independent constraints imposed on a vector $\mathbf{v} \in T_p
\FX$ by equations~\eqref{linearconstraints} is equal to the rank of the
matrix
\[
\alpha :=\left( \begin{array}{c c c c}
-A &-B &0 &0\\ 0 &0 &-A &-B \\ D &0 &C &0 \\ 0 &D &0 &C
\end{array}\right).
\]
Since $\det \alpha = 0$, it follows that $c_3 \le 3$. Any flag $(0)_p
\subset E_1 \subset E_2 \subset E_3 \subset E_4$ such that $\rank \alpha =
3$ (e.g. $A = C = 1$, $B = D = 0$) will give rise to $3$
linearly-independent $1$-forms, $({\bpi}^1, {\bpi}^2, {\bpi}^3)$, such that
$H(E_3) = \left\{ \mathbf{v} \in T_p \FX: \vphantom{|^|} {\bpi}^1(\mathbf{v}) =
{\bpi}^2(\mathbf{v}) = {\bpi}^3(\mathbf{v}) = 0 \right\}$. Hence $c_3 = 3$ for such
an integral flag.

\begin{corollary}
The {\XDSIC} $(\mathcal{I}, \bOm)$ contains no integral elements of
dimension $4$ that pass Cartan's test.
\end{corollary}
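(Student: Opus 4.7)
My plan is to combine the codimension count for $V_4(\mathcal{I}, \bOm)$ inside $\mathrm{Gr}_4(T\FX, \bOm)$ with the bound on the Cartan characters $c_k$ already obtained, and observe that the two quantities required to agree by Cartan's test do not match.

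First I would recall the statement of Cartan's test (cf.\ the formulation in Appendix~\ref{app:XDS}): an integral flag $(0)_p \subset E_1 \subset \cdots \subset E_4$ passes Cartan's test precisely when $\codim V_4(\mathcal{I},\bOm) = c_0 + c_1 + c_2 + c_3$, where the codimension is taken at the point $E_4 \in \mathrm{Gr}_4(T\FX, \bOm)$. The preceding discussion provides all the ingredients: the conditions~\eqref{LambdaConditions} cut out $V_4(\mathcal{I}, \bOm)$ inside $\mathrm{Gr}_4(T\FX, \bOm)$ as $4$ independent linear equations on the fibre coordinates $\llambda{a}{b}{c}$, giving $\codim V_4(\mathcal{I}, \bOm) = 4$.

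Next I would use what has already been computed about the Cartan characters for an arbitrary integral flag. The ideal contains no forms of degree less than $4$, so $c_0 = c_1 = c_2 = 0$. For $c_3$, the key point is that the matrix $\alpha$ governing the polar equations~\eqref{linearconstraints} satisfies $\det\alpha = 0$ identically in the parameters $A, B, C, D$ describing $E_3$; consequently $\rank \alpha \le 3$, so $c_3 \le 3$ irrespective of the choice of integral flag.

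Combining these gives $c_0 + c_1 + c_2 + c_3 \le 3 < 4 = \codim V_4(\mathcal{I}, \bOm)$. Since Cartan's test requires equality, no integral flag, and hence no integral element $E_4$, can pass it. The only real step is the codimension count for $V_4(\mathcal{I}, \bOm)$, but this has already been set up and follows immediately from~\eqref{LambdaConditions}; there is no significant obstacle, as the corollary is essentially a bookkeeping consequence of the identity $\det\alpha = 0$ noted just above the statement.
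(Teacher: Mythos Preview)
Your argument is correct and matches the paper's own proof essentially line for line: both use $\codim V_4(\mathcal{I},\bOm)=4$ from~\eqref{LambdaConditions}, $c_0=c_1=c_2=0$, and $c_3\le 3$ from $\det\alpha=0$ to conclude $c_0+c_1+c_2+c_3\le 3<4$.
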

\begin{proof}
The codimension of $V_4(\mathcal{I}, \bOm)$ at any integral element is
equal to $4$. Any four-dimensional integral flag has $c_0 = c_1 = c_2 = 0$
and $c_3 \le 3$. Therefore $c_0 + c_1 + c_2 + c_3 \le 3 \neq 4$, so no such
integral element passes Cartan's test.
\end{proof}

Note that the non-maximality of the rank of $\alpha$ is essentially the
same algebraic condition that led to the non-existence of
non-characteristic surfaces when we studied the linearisation of the
block-diagonalisation problem in Section~\ref{sec:block}.

\subsection{Prolongation}
Since the system $(\mathcal{I}, \bOm)$ is not involutive, we cannot
directly apply the Cartan--K\"{a}hler theorem. There is a standard technique
for dealing with such non-involutive exterior differential systems, namely
\emph{prolongation} (see, e.g.,~\cite{BCGGG, IL, Olver}). In the current
context, the (first) prolongation of the system $(\mathcal{I}, \bOm)$ is a
Pfaffian system defined on the manifold of four-dimensional integral
elements, $V_4(\mathcal{I}, \bOm)$, of the system $(\mathcal{I}, \bOm)$. In
particular, recall that $(x, g, \llambda{a}{b}{c})$ define a local
coordinate system on the Grassmannian bundle $Gr_4(T \FX)$ of four-planes
in the tangent bundle of $\FX$. Moreover, the space $M^{(1)} :=
V_4(\mathcal{I}, \bOm)$ is a thirty-dimensional manifold of the form $\FX
\times \mathbb{R}^{20}$, with the parameters $\llambda{a}{b}{c}$ subject to
the symmetry conditions~\eqref{LambdaConditions} as coordinates in the
$\mathbb{R}^{20}$ direction. (In particular, the conditions imposed by the
exterior differential system $(\mathcal{I}, \bOm)$ have already been imposed.)
As such $M^{(1)}$ may be viewed as a subspace
of the bundle $Gr_4(T \FX)$. The bundle $Gr_4(T \FX)$ comes equipped with a
natural set of contact forms, and the Pfaffian system that we consider on
$M^{(1)}$ is generated by the restriction of these differential forms.

More explicitly, we now consider the {\XDSIC}, $(\mathcal{I}^{(1)}, \bOm)$,
on the space $M^{(1)}$ generated by the $1$-forms
\begin{equation}
\bth^a{}_b := {\bom}^a{}_b - \sum_c \llambda{c}{a}{b} \, {\bom}^c, \qquad a, b = 1, \dots, 4; \qquad a < b,
\end{equation}
where ${\bom}^a{}_b$ and ${\bom}^a$ now denote the pull-backs to $M^{(1)}$
of the corresponding forms on $\FX$, with the independence condition
defined by the $4$-form $\bOm := {\bom}^1 \wedge {\bom}^2 \wedge {\bom}^3
\wedge {\bom}^4$.

We now look for four-dimensional integral elements, $E_4 \in
V_4(\mathcal{I}^{(1)}, \bOm)$, of this system. The point is that if $U$ is
an open subset of $X$ and $f \colon U \rightarrow M^{(1)}$ a local section of the
bundle $M^{(1)}$ with the property that $f^* \bth^a{}_b = 0$, $f^* \bOm
\neq 0$, then ${\bep}^i := f^* {\bom}^i$ define an orthonormal coframe on
$U$ that obeys~\eqref{dep}. As such, integral manifolds of
$(\mathcal{I}^{(1)}, \bOm)$ define solutions of our block-diagonalisation
problem. As a first step in showing the existence of such integral
manifolds we show that the system $(\mathcal{I}^{(1)}, \bOm)$ on $M^{(1)}$
is involutive. Applying the Cartan--K\"{a}hler theorem then gives the
solution to our block-diagonalisation problem. Our method here follows that
of~\cite{BCGGG}, Chapter~VII, \S3.

\vs
A short calculation shows that
\begin{equation}
d{\bth}^a{}_b \equiv - \sum_c d{\llambda{c}{a}{b}} \wedge {\bom}^c + \frac{1}{2} \sum_{c, d} T^a{}_{bcd} \, {\bom}^c \wedge {\bom}^d \mod{\bth},
\label{dtheta}
\end{equation}
where we have defined
\[
T^a{}_{bcd} := R^a{}_{bcd} + \sum_e \left[ \llambda{e}{a}{b} \left( \llambda{c}{e}{d} - \llambda{d}{e}{c} \right) - \llambda{c}{a}{e} \llambda{d}{e}{b} + \llambda{d}{a}{e} \llambda{c}{e}{b} \right].
\]
The second term in Equation~\eqref{dtheta} implies that there is torsion in
the Pfaffian system. We would like to absorb the torsion terms by
writing~\eqref{dtheta} in the form $d{\bth}^a{}_b \equiv - \sum_c
\boldpi{c}{a}{b} \wedge {\bom}^c \mod{\bth}$, where $\boldpi{c}{a}{b}
\equiv d{\llambda{c}{a}{b}} \mod{\bom^i}$ and the $1$-forms
${\bpi}_a{}^b{}_c$, $a, b, c = 1, \dots, 4$, $b < c$ obey symmetry
relations analogous to~\eqref{LambdaConditions} (e.g. $\boldpi{1}{2}{3} =
\boldpi{2}{1}{3}$). However, in the present case, there is an obstruction
to the existence of such $1$-forms $\boldpi{a}{b}{c}$, which lies in the
quantity, $T(x, g, \lambda)$, defined by the relation
\begin{equation}
\begin{aligned}
{\bom}^1 \wedge {\bom}^3 \wedge d{\bth}^1{}_3 + {\bom}^1 \wedge {\bom}^4 \wedge d{\bth}^1{}_4 +&
{\bom}^2 \wedge {\bom}^3 \wedge d{\bth}^2{}_3 + {\bom}^2 \wedge {\bom}^4 \wedge d{\bth}^2{}_4
\\
&\equiv
- 2 T(x, g, \lambda) \, {\bom}^1 \wedge {\bom}^2 \wedge{\bom}^3 \wedge {\bom}^4 \mod{\bth}.
\end{aligned}
\label{Tnot0}
\end{equation}
$T(x,g,\lambda)$ is then given in terms of the curvature by the expression
\begin{align*}
T(x, g, \lambda) := &R_{1234}(x, g) + \llambda{1}{2}{3} \left( \llambda{2}{2}{4} - \llambda{1}{1}{4} \right)
+ \llambda{1}{2}{4} \left( \llambda{1}{1}{3} - \llambda{2}{2}{3} \right)
\\
&\hskip 3cm
+ \llambda{3}{4}{1} \left( \llambda{4}{4}{2} - \llambda{3}{3}{2} \right)
+ \llambda{3}{4}{2} \left( \llambda{3}{3}{1} - \llambda{4}{4}{1} \right).
\end{align*}
In particular, an explicit calculation (for details,
see Appendix~\ref{sec:absorb}) shows that it is possible to absorb most of the
torsion terms in \eqref{Tnot0} and there exist $1$-forms,
$\boldpi{a}{b}{c}$, on $M^{(1)}$ satisfying $\boldpi{a}{b}{c} \equiv
d{\llambda{a}{b}{c}} \mod{{\bom}^i}$ in terms of which
equations~\eqref{dtheta} take the form
\begin{equation}
\begin{aligned}
d{\bth}^1{}_2 &\equiv - \sum_a \boldpi{a}{1}{2} \wedge {\bom}^a \mod{\bth},
\\
d{\bth}^1{}_3 &\equiv - \sum_a \boldpi{a}{1}{3} \wedge {\bom}^a \mod{\bth},
\\
d{\bth}^1{}_4 &\equiv - \sum_a \boldpi{a}{1}{4} \wedge {\bom}^a \mod{\bth},
\\
d{\bth}^2{}_3 &\equiv - \sum_a \boldpi{a}{2}{3} \wedge {\bom}^a \mod{\bth},
\\
d{\bth}^2{}_4 &\equiv - \sum_a \boldpi{a}{2}{4} \wedge {\bom}^a + 2 T {\bom}^1 \wedge {\bom}^3 \mod{\bth},
\\
d{\bth}^3{}_4 &\equiv - \sum_a \boldpi{a}{3}{4} \wedge {\bom}^a \mod{\bth}.
\end{aligned}
\label{essentialtorsion}
\end{equation}
Equation~\eqref{Tnot0} implies, however, that it is not possible to absorb
the remaining torsion by a redefinition of the forms $\boldpi{a}{b}{c}$. In
particular, it implies that there is essential torsion in the system
characterised by the function $T$. The existence of such essential torsion
implies that a necessary condition for the existence of an integral element
$E_4 \subset T_p M^{(1)}$ based at a point $p \in M^{(1)}$ is that $p$
satisfies the compatibility condition $T(p) = 0$. We define the
non-singular part of the subspace where this condition holds,
\[
S^{(1)} := \left\{p \in M^{(1)}: \vphantom{|^|} T(p) = 0, dT(p) \neq 0 \right\},
\]
which (by the implicit function theorem) is a codimension-one submanifold,
$i \colon S \rightarrow M^{(1)}$, of $M^{(1)}$.

\begin{remark}
Note that an explicit calculation of $dT$ shows that, given $(x, g) \in
\FX$, for generic $\lambda$ we have $dT(x, g, \lambda) \neq 0$.
\end{remark}

We define the $1$-forms ${\widetilde{\bth}}{}^a{}_b := i^* {\bth}^a{}_b$,
$\widetilde{\bom}^a := i^* {\bom}^a$ on $S$, and consider the Pfaffian
system $(\widetilde{\mathcal{I}}, \widetilde{\Omega})$ on $S$ generated by
$\{ {\widetilde{\bth}}{}^a{}_b \}$ with independence condition
$\widetilde{\Omega} := i^* \Omega = \widetilde{\bom}^1 \wedge
\widetilde{\bom}^2 \wedge \widetilde{\bom}^3 \wedge \widetilde{\bom}^4$. We
then have the following:

\begin{proposition}
There exist $1$-forms, $\boldpitilde{a}{b}{c} \in \Omega^1(S)$, for $a, b,
c = 1, \dots, 4$ with $b < c$, that satisfy
\begin{enumerate}
\item $\boldpitilde{a}{b}{c} \equiv i^* \!\left( d{\llambda{a}{b}{c}}
\right) \! \mod{\widetilde{\bom}^i}$,
\item $\boldpitilde{1}{2}{3} = \boldpitilde{2}{1}{3}, \quad
\boldpitilde{1}{2}{4} = \boldpitilde{2}{1}{4}, \quad
\boldpitilde{3}{1}{4} = \boldpitilde{4}{1}{3}, \quad
\boldpitilde{3}{2}{4} = \boldpitilde{4}{2}{3}$,
\end{enumerate}
with the property that
\begin{equation}
d{\widetilde{\bth}}{}^a{}_b \equiv - \sum_c \boldpitilde{c}{a}{b} \wedge \widetilde{\bom}^c \mod{\widetilde{\bth}}.
\label{dbth}
\end{equation}
\label{prop:dbth}
\end{proposition}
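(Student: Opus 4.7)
The plan is to use the absorption of torsion that has already been carried out on $M^{(1)}$ and summarised in~\eqref{essentialtorsion}, and simply to pull the resulting identity back to $S$ along the embedding $i \colon S \hookrightarrow M^{(1)}$. All three conclusions of the proposition will then follow essentially automatically, provided the $T$-term in the fifth line of~\eqref{essentialtorsion} vanishes after restriction to $S$ --- which is true by the very definition of $S$.

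Concretely, I would first take the $1$-forms $\boldpi{a}{b}{c}$ on $M^{(1)}$ produced by the absorption calculation of Appendix~\ref{sec:absorb}. By construction these satisfy $\boldpi{a}{b}{c} \equiv d\llambda{a}{b}{c} \mod{\bom^i}$, they obey the symmetry relations $\boldpi{1}{2}{3} = \boldpi{2}{1}{3}$, $\boldpi{1}{2}{4} = \boldpi{2}{1}{4}$, $\boldpi{3}{1}{4} = \boldpi{4}{1}{3}$, $\boldpi{3}{2}{4} = \boldpi{4}{2}{3}$, and they put $d{\bth}^a{}_b$ in the form~\eqref{essentialtorsion}, in which every torsion contribution has been absorbed apart from the single term $2 T \, \bom^1 \wedge \bom^3$ in $d{\bth}^2{}_4$. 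This absorption is the main technical step, and is essentially linear algebra in the coefficients of $\boldpi{a}{b}{c}$ modulo $\bom^i$ subject to the symmetry constraints; it is performed in the appendix.

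I would then set $\boldpitilde{a}{b}{c} := i^* \boldpi{a}{b}{c} \in \Omega^1(S)$. Property~(i) is immediate because $i^*$ is $\mathbb{R}$-linear and $i^* \bom^c = \widetilde{\bom}^c$, so $\boldpitilde{a}{b}{c} - i^*\!\left( d\llambda{a}{b}{c} \right)$ remains a linear combination of $\widetilde{\bom}^i$. Property~(ii) follows because equalities of forms on $M^{(1)}$ are preserved under pullback. For property~(iii), applying $i^*$ termwise to~\eqref{essentialtorsion} and using $i^* \, d{\bth}^a{}_b = d{\widetilde{\bth}}{}^a{}_b$ together with the observation that the congruence $\mod{\bth}$ descends to $\mod{\widetilde{\bth}}$, all six lines reduce to~\eqref{dbth}; in particular the residual torsion term in the $d{\bth}^2{}_4$ line becomes $2 (i^* T) \, \widetilde{\bom}^1 \wedge \widetilde{\bom}^3 = 0$ on $S$.

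The main obstacle is thus really the absorption step on $M^{(1)}$, which isolates the essential torsion into the single scalar $T$; once that is in hand, the present proposition reduces to the observation that passing to the zero locus of $T$ removes the sole obstruction to writing the structure equations in the Pfaffian form~\eqref{dbth}.
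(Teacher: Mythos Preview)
Your proposal is correct and follows exactly the same approach as the paper: define $\boldpitilde{a}{b}{c} := i^*\boldpi{a}{b}{c}$, pull back the structure equations~\eqref{essentialtorsion}, and use $T \circ i = 0$ to kill the residual torsion term. The paper's own proof is a one-line version of what you wrote.
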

\begin{proof}
Taking the pull-back of equations~\eqref{essentialtorsion} to $S$, and
using the fact that $T \circ i = 0$, we deduce that the $1$-forms
$\boldpitilde{a}{b}{c} := i^* \!\left( \boldpi{a}{b}{c} \right)$ on $S$
have the required properties.
\end{proof}

Rather than using $\llambda{a}{b}{c}$ as coordinates it will be useful to
introduce new coordinates $y^1,\dots, y^8$ and $z^1,\dots, z^4$ on
$M^{(1)}$ defined by
\begin{align*}
y^1 &:= \llambda{1}{2}{3}, & &y^2 := \frac{1}{2} \left( \llambda{2}{2}{4} - \llambda{1}{1}{4} \right),
&y^3 &:= \llambda{1}{2}{4}, & &y^4 := \frac{1}{2} \left( \llambda{2}{2}{3} - \llambda{1}{1}{3} \right),
\\
y^5 &:= \llambda{3}{4}{1}, & &y^6 := \frac{1}{2} \left( \llambda{4}{4}{2} - \llambda{3}{3}{2} \right),
&y^7 &:= \llambda{3}{4}{2}, & &y^8 := \frac{1}{2} \left( \llambda{4}{4}{1} - \llambda{3}{3}{1} \right)
\end{align*}
and
\begin{align*}
z^1 &:= \frac{1}{2} \left( \llambda{2}{2}{4} + \llambda{1}{1}{4} \right), \quad
& &z^2 := \frac{1}{2} \left( \llambda{2}{2}{3} + \llambda{1}{1}{3} \right), \quad
\\
z^3 &:= \frac{1}{2} \left( \llambda{4}{4}{2} + \llambda{3}{3}{2} \right), \quad
& &z^4 := \frac{1}{2} \left( \llambda{4}{4}{1} + \llambda{3}{3}{1} \right).
\end{align*}
In terms of these coordinates our constraint equation takes the form
\begin{equation}
T(x, g, y, z) = R_{1234}(x, g) + 2 \left( y^1 y^2 - y^3 y^4 + y^5 y^6 - y^7 y^8 \right) = 0,
\label{constraint1}
\end{equation}
so that the constraint does not depend upon the $z$ coordinates.

We now write the structure equations~\eqref{dbth} in the form
\begin{equation}
d\left(\begin{array}{c}
\widetilde{\bth}{}^1{}_2
\\
\widetilde{\bth}{}^1{}_3
\\
\widetilde{\bth}{}^1{}_4
\\
\widetilde{\bth}{}^2{}_3
\\
\widetilde{\bth}{}^2{}_4
\\
\widetilde{\bth}{}^3{}_4
\end{array}\right)
\equiv \pi \wedge
\left(\begin{array}{c}
\widetilde{\bom}^1
\\
\widetilde{\bom}^2
\\
\widetilde{\bom}^3
\\
\widetilde{\bom}^4
\end{array}\right) \!
\mod{\widetilde{\bth}}.
\label{structureequations}
\end{equation}
Here, the matrix of $1$-forms $\pi$ (which, modulo $\{ \widetilde{\bth},
\widetilde{\bom} \}$, is the tableau matrix of $(\widetilde{\mathcal{I}},
\widetilde{\bOm})$ at $x$) is given by
\begin{equation}
\pi =
-\left(\begin{array}{c c c c}
\boldpitilde{1}{1}{2} &\boldpitilde{2}{1}{2} &\boldpitilde{3}{1}{2} &\boldpitilde{4}{1}{2}
\\
\boldpitilde{1}{1}{3} &\boldpitilde{2}{1}{3} &\boldpitilde{3}{1}{3} &\boldpitilde{4}{1}{3}
\\
\boldpitilde{1}{1}{4} &\boldpitilde{2}{1}{4} &\boldpitilde{4}{1}{3} &\boldpitilde{4}{1}{4}
\\
\boldpitilde{2}{1}{3} &\boldpitilde{2}{2}{3} &\boldpitilde{3}{2}{3} &\boldpitilde{4}{2}{3}
\\
\boldpitilde{2}{1}{4} &\boldpitilde{2}{2}{4} &\boldpitilde{4}{2}{3} &\boldpitilde{4}{2}{4}
\\
\boldpitilde{1}{3}{4} &\boldpitilde{2}{3}{4} &\boldpitilde{3}{3}{4} &\boldpitilde{4}{3}{4}
\end{array}\right)
\label{tableau}
\end{equation}
In order to simplify notation, we define the $1$-forms $\widetilde{\bpi}^{\alpha}$, $\alpha = 1, \dots, 8$, by
\begin{align*}
\widetilde{\bpi}^1 &:= \boldpitilde{1}{2}{3}, \qquad
&\widetilde{\bpi}^2 &:= \frac{1}{2} \left( \boldpitilde{2}{2}{4} - \boldpitilde{1}{1}{4} \right),
\\
\widetilde{\bpi}^3 &:= \boldpitilde{1}{2}{4}, \qquad
&\widetilde{\bpi}^4 &:= \frac{1}{2} \left( \boldpitilde{2}{2}{3} - \boldpitilde{1}{1}{3} \right),
\\
\widetilde{\bpi}^5 &:= \boldpitilde{3}{4}{1}, \qquad
&\widetilde{\bpi}^6 &:= \frac{1}{2} \left( \boldpitilde{4}{4}{2} - \boldpitilde{3}{3}{2} \right),
\\
\widetilde{\bpi}^7 &:= \boldpitilde{3}{4}{2}, \qquad
&\widetilde{\bpi}^8 &:= \frac{1}{2} \left( \boldpitilde{4}{4}{1} - \boldpitilde{3}{3}{1} \right),
\end{align*}
which have the property that
$\widetilde{\bpi}^{\alpha} \equiv i^*dy^{\alpha} \mod{\widetilde{\bom}^a}$ for $\alpha = 1, \dots, 8$.
We also define the 1-forms $\widetilde{\bro^a}$, $a=1,\dots, 4$ by
\begin{align*}
\widetilde{\bro}^1 &:= \frac{1}{2} \left( \boldpitilde{2}{2}{4}
+\boldpitilde{1}{1}{4} \right),
&\widetilde{\bro}^2 &:= \frac{1}{2} \left( \boldpitilde{2}{2}{3} + \boldpitilde{1}{1}{3} \right),
\\
\widetilde{\bro}^3 &:= \frac{1}{2} \left(
\boldpitilde{4}{4}{2}+\boldpitilde{3}{3}{2} \right),
&\widetilde{\bro}^4 &:= \frac{1}{2} \left( \boldpitilde{4}{4}{1} + \boldpitilde{3}{3}{1} \right),
\end{align*}
which have the property that $\widetilde{\bro}^a \equiv i^* dz^a
\pmod{\widetilde{\bom}^1,\dots, \widetilde{\bom}^4}$ for $a = 1, \dots, 4$.
In addition, we define $1$-forms $\{ \widetilde{\bmu}^a \}_{a=1}^4$ and
$\{\widetilde{\bnu}^a \}_{a=1}^4$ by
\begin{align*}
\left( \widetilde{\bmu}^1, \dots, \widetilde{\bmu}^4 \right) &:=
\left( \boldpitilde{1}{1}{2}, \boldpitilde{2}{1}{2}, \boldpitilde{3}{1}{2}, \boldpitilde{4}{1}{2} \right),
\\
\left( \widetilde{\bnu}^1, \dots, \widetilde{\bnu}^4 \right) &:=
\left( \boldpitilde{1}{3}{4}, \boldpitilde{2}{3}{4}, \boldpitilde{3}{3}{4}, \boldpitilde{4}{3}{4} \right).
\end{align*}
In this notation we have
\begin{equation}
\pi =
-\left(\begin{array}{c c c c}
\widetilde{\bmu}^1 &\widetilde{\bmu}^2 &\widetilde{\bmu}^3 &\widetilde{\bmu}^4
\\
\widetilde{\bro}^2 - \widetilde{\bpi}{}^4 &\widetilde{\bpi}{}^1 &-\widetilde{\bro}^4 + \widetilde{\bpi}{}^8 &\widetilde{\bpi}{}^5
\\
\widetilde{\bro}^1 - \widetilde{\bpi}{}^2 &\widetilde{\bpi}{}^3 &\widetilde{\bpi}{}^5 &-\widetilde{\bro}^4 - \widetilde{\bpi}{}^8
\\
\widetilde{\bpi}{}^1 & \widetilde{\bro}^2 + \widetilde{\bpi}{}^4 &-\widetilde{\bro}^3 + \widetilde{\bpi}{}^6 &\widetilde{\bpi}{}^7
\\
\widetilde{\bpi}{}^3 &\widetilde{\bro}^1 + \widetilde{\bpi}{}^2 &\widetilde{\bpi}{}^7 &-\widetilde{\bro}^3 - \widetilde{\bpi}{}^6
\\
\widetilde{\bnu}^1 &\widetilde{\bnu}^2 &\widetilde{\bnu}^3 &\widetilde{\bnu}^4
\end{array}\right).
\label{tableau2}
\end{equation}

We now note that, since the functions $(x, g, y)$ obey
equation~\eqref{constraint1} on $S^{(1)}$, they will not be functionally
independent when pulled back to $S$. In particular, we require that $i^*
(dT) = 0$, which translates into the condition that
\[
2 \left( \widetilde{y}^1 d\widetilde{y}^2 + \widetilde{y}^2 d\widetilde{y}^1 - \widetilde{y}^3 d\widetilde{y}^4 - \widetilde{y}^4 d\widetilde{y}^3 + \widetilde{y}^5 d\widetilde{y}^6 + \widetilde{y}^6 d\widetilde{y}^5 - \widetilde{y}^7 d\widetilde{y}^8 - \widetilde{y}^8 d\widetilde{y}^7 \right) + \sum_a \Phi_a \widetilde{\bom}^a \equiv 0 \mod{ {\widetilde{\bth}}}
\]
on $S$, where
\[
\Phi_a = i^* \left( \frac{\partial}{\partial {\bom}^a} R_{1234} \right) +
\sum_b \left[ \llambdatilde{a}{b}{1} \widetilde{R}_{b234} +
\llambdatilde{a}{b}{2} \widetilde{R}_{1b34} + \llambdatilde{a}{b}{3}
\widetilde{R}_{12b4} + \llambdatilde{a}{b}{4} \widetilde{R}_{123b} \right],
\]
and $\widetilde{y}^{\alpha} := i^* y^{\alpha} = y^{\alpha} \circ i$, etc,
denote the pull-backs to $S$ of the corresponding functions on
$M^{(1)}$. In particular, since $\widetilde{\bpi}^{\alpha} \equiv
i^*dy^{\alpha} \mod{\widetilde{\bom}^a}$ for $\alpha = 1, \dots, 8$, there
exist functions $\Psi_a$ on $S$ such that
\begin{equation}
i^* (dT) = \widetilde{y}^1 \widetilde{\bpi}{}^2 + \widetilde{y}^2 \widetilde{\bpi}{}^1 - \widetilde{y}^3 \widetilde{\bpi}{}^4 - \widetilde{y}^4 \widetilde{\bpi}{}^3 + \widetilde{y}^5 \widetilde{\bpi}{}^6 + \widetilde{y}^6 \widetilde{\bpi}{}^5 - \widetilde{y}^7 \widetilde{\bpi}{}^8 - \widetilde{y}^8 \widetilde{\bpi}{}^7 + \sum_a \Psi_a \widetilde{\bom}^a \equiv 0 \mod{ {\widetilde{\bth}}}.
\label{lindep}
\end{equation}
Recall, however, that the $1$-forms, $\boldpitilde{a}{b}{c}$, are not
uniquely determined, and that we may add to them any linear combination of
the $1$-forms $\{ \widetilde{\bom}^a \}$ consistent with
equations~\eqref{structureequations} and~\eqref{tableau}. At a generic
point $p \in S$ at which $\widetilde{y}^1(p), \dots, \widetilde{y}^8(p)$
are all non-zero, it is shown in Appendix~\ref{sec:absorb2} that all the
functions $\Psi_a$ in equation~\eqref{lindep} may be absorbed into a
redefinition of the $1$-forms $\widetilde{\bpi}{}^1, \dots,
\widetilde{\bpi}{}^8$ and $\widetilde{\bro}{}^1, \dots,
\widetilde{\bro}{}^4$. Noting that the non-vanishing of $\widetilde{y}^1,
\dots, \widetilde{y}^8$ is an open condition, we deduce that we may take
the $1$-forms $\widetilde{\bpi}{}^1, \dots, \widetilde{\bpi}{}^8$ to obey
the linear-dependence condition
\begin{equation}
\widetilde{y}^1 \widetilde{\bpi}{}^2 + \widetilde{y}^2 \widetilde{\bpi}{}^1 - \widetilde{y}^3 \widetilde{\bpi}{}^4 - \widetilde{y}^4 \widetilde{\bpi}{}^3 + \widetilde{y}^5 \widetilde{\bpi}{}^6 + \widetilde{y}^6 \widetilde{\bpi}{}^5 - \widetilde{y}^7 \widetilde{\bpi}{}^8 - \widetilde{y}^8 \widetilde{\bpi}{}^7 \equiv 0 \mod{ {\widetilde{\bth}}}
\label{lindep2}
\end{equation}
on an open neighbourhood, $U$, of the point $p$ in $S$. This relationship
implies (via the structure equations~\eqref{structureequations}) that the
essential torsion of the system $(\widetilde{\mathcal{I}},
\widetilde{\bOm})$ is zero on the open set $U$. It follows from
Proposition~\ref{5.16} that the system $(\widetilde{\mathcal{I}},
\widetilde{\bOm})$ is involutive at $p$ if and only if the tableau $A_p$ is
involutive.

To show that this is the case, we need to know the reduced Cartan
characters of the tableau $A_p$, and the dimension of the first
prolongation, $A_p^{(1)}$, of $A_p$.

\begin{proposition}
The first prolongation of the tableau $A_p$ is an affine-linear space of
dimension $41$.
\label{prop41}
\end{proposition}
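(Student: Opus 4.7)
The first prolongation is $A_p^{(1)} = (A_p \otimes V^*) \cap (W \otimes S^2 V^*)$, where $V \cong \mathbb{R}^4$ is dual to $\widetilde{\bom}^1, \dots, \widetilde{\bom}^4$ and $W \cong \mathbb{R}^6$ is dual to the six components of $\widetilde{\bth}$. My plan is to parametrise this space explicitly and count its dimension. For each of the twenty $1$-forms $\widetilde{\bmu}^a$, $\widetilde{\bnu}^a$, $\widetilde{\bpi}^\alpha$, $\widetilde{\bro}^a$ appearing in the tableau matrix~\eqref{tableau2}, I write the expansion $\sum_c (\cdot)_c \widetilde{\bom}^c$ modulo $\widetilde{\bth}$, introducing $4 \cdot 20 = 80$ scalar coefficients. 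Prolonging the linear-dependence relation~\eqref{lindep2} imposes four scalar conditions on the $\widetilde{\bpi}^\alpha$-coefficients, so the ambient space $A_p \otimes V^*$ has dimension $80 - 4 = 76$.

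The condition that such a parametrised tensor actually lie in $A_p^{(1)}$ is symmetry in the two $V^*$ factors: for each entry $\pi^a{}_c$ of~\eqref{tableau2}, expanded as $\sum_d P^a_{cd} \widetilde{\bom}^d$, one requires $P^a_{cd} = P^a_{dc}$. With six rows and $\binom{4}{2} = 6$ pairs $c < d$, this yields $36$ linear conditions. The twelve coming from rows~1 and~6 merely symmetrise the matrices $(\mu^a_c)$ and $(\nu^a_c)$ and are clearly independent; the remaining twenty-four (from rows~2--5) couple the $\widetilde{\bpi}^\alpha_c$ and $\widetilde{\bro}^a_c$ coefficients in the pattern dictated by~\eqref{tableau2}. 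The crux of the calculation is to show that, amongst the forty total conditions (four from~\eqref{lindep2} and $36$ from symmetry), exactly one is redundant, so that only thirty-nine are independent, whence $\dim A_p^{(1)} = 80 - 39 = 41$.

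As an independent check I would compute the Cartan characters of $A_p$ directly from~\eqref{tableau2} using a truly generic flag in $V$ (the coordinate flag is \emph{not} generic, owing to the repeated entries and block structure of~\eqref{tableau2}, and produces misleading values). A short linear-algebra calculation yields $\dim A_1 = 13$, $\dim A_2 = 7$, $\dim A_3 = 2$, hence characters $s_1 = s_2 = 6$, $s_3 = 5$, $s_4 = 2$; these are monotonically non-increasing and sum to $\dim A_p = 19$. Cartan's upper bound $s_1 + 2 s_2 + 3 s_3 + 4 s_4 = 6 + 12 + 15 + 8 = 41$ then matches the direct count. The key point enabling $s_3 = 5$ rather than the na\"{\i}ve $6$ is that~\eqref{lindep2} is a consequence of the eighteen column-vanishing conditions imposed by three generic evaluations, and this is precisely the source of the single redundancy in the direct parametrisation.

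The main obstacle is locating that single linear dependence explicitly---equivalently, showing that the antisymmetrisation map $A_p \otimes V^* \to W \otimes \Lambda^2 V^*$ (whose kernel is $A_p^{(1)}$) has image of codimension exactly one in $\mathbb{R}^{36}$. The remainder of the argument is mechanical linear algebra; the delicate combinatorial point is that~\eqref{lindep2}, which is the compatibility condition enforced by restricting to $S$, is exactly what produces this redundancy and lifts the prolongation dimension from $40$ to the stated value of $41$.
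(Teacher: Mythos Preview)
Your overall strategy coincides with the paper's: identify $\dim A_p^{(1)}$ with the degree of indeterminacy of the coframe (the number of variations of the type~\eqref{pirho},~\eqref{pirho2} preserving the structure equations~\eqref{structureequations} with tableau~\eqref{tableau2}) and count. The paper carries this out constructively in Appendix~\ref{sec:absorb2}: it writes down the most general variation $(\delta\widetilde{\bpi}^\alpha,\delta\widetilde{\bro}^a)$ consistent with rows~2--5 of~\eqref{tableau2} and finds an explicit $25$-parameter family; preserving~\eqref{lindep2} then imposes four further independent conditions, leaving $21$; finally $\delta\widetilde{\bmu}^a$ and $\delta\widetilde{\bnu}^a$ contribute $10$ each (symmetric $4\times4$), giving $21+10+10=41$.

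Your sketch has a gap you acknowledge and a mis-diagnosis that would send its completion in the wrong direction. The single redundancy among the thirty-six symmetry conditions is \emph{not} produced by~\eqref{lindep2}; it is intrinsic to the repeated-entry pattern in rows~2--5 of~\eqref{tableau2} (for instance $\widetilde{\bpi}^1$ sits at positions $(2,2)$ and $(4,1)$, $\widetilde{\bpi}^5$ at $(2,4)$ and $(3,3)$, $\widetilde{\bpi}^3$ at $(3,2)$ and $(5,1)$, $\widetilde{\bpi}^7$ at $(4,4)$ and $(5,3)$). Concretely: even for the \emph{unconstrained} $20$-dimensional tableau (ignoring~\eqref{lindep2}) the twenty-four row-2--5 symmetry conditions already have rank only $23$, as the paper's explicit $25$-parameter solution exhibits. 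The four conditions coming from~\eqref{lindep2} are then genuinely new (the paper solves them for four of the $25$ parameters), so~\eqref{lindep2} \emph{reduces} the prolongation dimension from $45$ to $41$ rather than lifting it from $40$. Likewise in your Cartan-character check, $s_3'=5$ is not because~\eqref{lindep2} follows from three column-vanishings; it is because row~2 of any combination $\alpha\cdot\text{col}\,2+\beta\cdot\text{col}\,4$ equals $\alpha\,\widetilde{\bpi}^1+\beta\,\widetilde{\bpi}^5$, which already lies in the span of columns~$1$ and~$3$. If you want to complete your direct count, you should look for the redundancy among the twenty-four symmetry relations on the $\widetilde{\bpi},\widetilde{\bro}$ block alone, not in any interaction with~\eqref{lindep2}.
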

\begin{proof}
See Appendix~\ref{sec:absorb2}.
\end{proof}

\vs

\begin{proposition}
The system $(\widetilde{\mathcal{I}}, \widetilde{\Omega})$ has reduced
Cartan characters
\[
s_1^{\prime} = 6, \quad s_2^{\prime} = 6, \quad s_3^{\prime} = 5, \quad s_4^{\prime} = 2.
\]
\end{proposition}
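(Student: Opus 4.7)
My plan is to compute the cumulative sums $\sigma_k := s_1' + \cdots + s_k'$ directly from the tableau $\pi$ in \eqref{tableau2}, using the identity $\sigma_k = \dim \mathrm{Im}\bigl(A_p \to W \otimes V_k^*\bigr)$ for a generic $k$-plane $V_k^* \subset V^* := \mathrm{span}\{\widetilde{\bom}^1,\ldots,\widetilde{\bom}^4\}$. The total dimension is immediate: the tableau entries involve the twenty $1$-forms $\widetilde{\bmu}^a, \widetilde{\bnu}^a, \widetilde{\bpi}^\alpha, \widetilde{\bro}^a$ modulo only the single relation \eqref{lindep2}, so $\sigma_4 = \dim A_p = 19$.

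The calculation simplifies considerably because the tableau decouples into two independent blocks: the ``outer'' rows $\{1, 6\}$ involve only $\widetilde{\bmu}^a, \widetilde{\bnu}^a$, while the ``middle'' rows $\{2, 3, 4, 5\}$ involve only $\widetilde{\bpi}^\alpha, \widetilde{\bro}^a$. Since these two sets of $1$-forms are linearly independent, $\sigma_k = \sigma_k^{\mathrm{out}} + \sigma_k^{\mathrm{mid}}$, and the outer $2 \times 4$ block with its obvious diagonal structure trivially gives $\sigma_k^{\mathrm{out}} = \min(2k, 8) = 2k$ for $k \le 4$.

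For the middle $4 \times 4$ block, of total dimension $11$, I would verify that $\sigma_k^{\mathrm{mid}}$ attains the upper bound $\min(4k, 11)$ at each $k$ by exhibiting specific $V_k^*$ realising this rank. Taking $(\bom'^1, \bom'^2) = (\widetilde{\bom}^1, \widetilde{\bom}^3)$ displays the eight entries of the first two columns of the middle sub-tableau as $\widetilde{\bpi}^1, \widetilde{\bpi}^3, \widetilde{\bpi}^5, \widetilde{\bpi}^7, \widetilde{\bro}^2 - \widetilde{\bpi}^4, \widetilde{\bro}^1 - \widetilde{\bpi}^2, -\widetilde{\bro}^4 + \widetilde{\bpi}^8, -\widetilde{\bro}^3 + \widetilde{\bpi}^6$, which are manifestly independent and remain so modulo \eqref{lindep2} whenever the $\widetilde{y}^\alpha$ are non-zero; hence $\sigma_2^{\mathrm{mid}} = 8$. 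Taking $(\bom'^1, \bom'^2, \bom'^3) = (\widetilde{\bom}^1, \widetilde{\bom}^2, \widetilde{\bom}^3)$, the first three columns of the middle block directly span ten forms; substituting $\widetilde{\bpi}^6 = (-\widetilde{\bro}^3 + \widetilde{\bpi}^6) + \widetilde{\bro}^3$ and $\widetilde{\bpi}^8 = (-\widetilde{\bro}^4 + \widetilde{\bpi}^8) + \widetilde{\bro}^4$ into \eqref{lindep2} then reveals $\widetilde{y}^5 \widetilde{\bro}^3 - \widetilde{y}^7 \widetilde{\bro}^4$ as an eleventh form in the span, giving $\sigma_3^{\mathrm{mid}} = 11$.

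Assembling, $\sigma_k = 6, 12, 17, 19$ for $k = 1, \ldots, 4$, whence $(s_1', s_2', s_3', s_4') = (6, 6, 5, 2)$. The main technical hurdle is the computation $\sigma_3^{\mathrm{mid}} = 11$: the eleventh form in the span is supplied by the relation \eqref{lindep2}, and verifying that it is not already among the ten explicit entries of the first three columns requires the non-vanishing of at least some of the coefficients $\widetilde{y}^\alpha$ at $p$, which is part of the genericity assumption built into the setup.
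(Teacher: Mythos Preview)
Your outer/middle decomposition is a clean reorganisation, and the computations of $\sigma_1=6$, $\sigma_2=12$ and $\sigma_4=19$ are correct. The error lies in the step for $\sigma_3$. With the non-generic flag $(\widetilde{\bom}^1,\widetilde{\bom}^2,\widetilde{\bom}^3)$, the twelve middle-block entries in columns $1,2,3$ span only a ten-dimensional subspace of the eleven-dimensional ambient space, so this flag gives $\sigma_3^{\mathrm{mid}}=10$, not $11$. Your claim that the relation~\eqref{lindep2} ``reveals $\widetilde{y}^5\widetilde{\bro}^3-\widetilde{y}^7\widetilde{\bro}^4$ as an eleventh form in the span'' is backwards: the relation is a linear dependence \emph{among} the twelve generators $\widetilde{\bpi}^\alpha,\widetilde{\bro}^a$, and its only effect is to make the ambient space eleven-dimensional. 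It cannot enlarge the span of the ten tableau entries. Concretely, in the quotient the relation forces $\widetilde{y}^5\widetilde{\bro}^3-\widetilde{y}^7\widetilde{\bro}^4$ to equal a combination of the ten entries, but that just says this particular vector already lies in their (ten-dimensional) span; it does not produce an eleventh independent direction.

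To achieve $\sigma_3^{\mathrm{mid}}=11$ you must use a genuinely generic third basis vector. The paper takes columns $1$, $3$, and $\alpha\cdot(\text{column }2)+\beta\cdot(\text{column }4)$ with $\alpha,\beta\neq0$: the first two give eight independent middle-block forms, and the third contributes three more (its first row is redundant, but rows $2$, $3$, $4$ each give one new direction), for eleven in the formal twelve-dimensional space. One then checks that the relation vector is \emph{not} in this eleven-dimensional formal span (it would require $\beta\widetilde{y}^1+\alpha\widetilde{y}^5=0$, which fails generically), so all eleven survive in the quotient. With this correction your argument goes through and matches the paper's proof, differing only in the cosmetic outer/middle bookkeeping.
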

\begin{proof}
Let $p \in S$ with $\widetilde{y}^1(p), \dots, \widetilde{y}^8(p)$ all
non-zero. Equation~\eqref{lindep} may then be looked on as defining one of
the $1$-forms, say ${\widetilde{\bpi}}^8$, in terms of the other
seven. Note that, since the thirty $1$-forms $\{ \widetilde{\bom}^i,
\widetilde{\bth}{}^a{}_b, {\widetilde{\bro}}^a, {\widetilde{\bmu}}^a,
{\widetilde{\bnu}}^a \}$ must span the cotangent space at each point of the
twenty-nine-dimensional manifold $S$, it follows that the linear
relation~\eqref{lindep} is the only relation obeyed by these $1$-forms on
$S$. As such, once we have substituted for ${\widetilde{\bpi}}^8$, say, the
remaining differential forms $\{ \widetilde{\bpi}{}^1, \dots,
\widetilde{\bpi}{}^7, {\widetilde{\bro}}^a, {\widetilde{\bmu}}^a,
{\widetilde{\bnu}}^a \}$ that appear in the matrix $\pi$ are
linearly-independent on $S$.

We then consider the tableau matrix, $\overline{\pi} := \pi
\mod{\widetilde{\bth}, \widetilde{\bom}}$, and we wish to calculate the
reduced Cartan characters. This should be computed with respect to a
generic basis of $1$-forms $\{ {\bom}^i \}$, so we note that the tableau
relative to a different basis, $\underline{\widetilde{\bom}}^a := \sum_b
\left( \sigma^{-1} \right)^a{}_b \, {\widetilde{\bom}}^b$ where $\sigma \in
\mathrm{GL}(4, \mathbb{R})$, is given by $\overline{\pi}_{\sigma} :=
\overline{\pi} \sigma$. Substituting for ${\widetilde{\bpi}}^8$ into the
tableau matrix and noting that this is the only relationship that our
differential forms obey, we see that $\pi$ then has six
linearly-independent $1$-forms in its first column:
\[
{\widetilde{\bmu}}{}^1, \quad {\widetilde{\bro}}^2 - {\widetilde{\bpi}}^4, \quad {\widetilde{\bro}}^1 - {\widetilde{\bpi}}^2, \quad {\widetilde{\bpi}}^1, \quad {\widetilde{\bpi}}^3, \quad {\widetilde{\bnu}}{}^1.
\]
Therefore $s_1^{\prime} = 6$. The $1$-forms in column three:
\[
{\widetilde{\bmu}}{}^3, \quad -{\widetilde{\bro}}^4 + {\widetilde{\bpi}}^8, \quad {\widetilde{\bpi}}^5, \quad -{\widetilde{\bro}}^3 + \quad {\widetilde{\bpi}}^6, \quad {\widetilde{\bpi}}^7, \quad {\widetilde{\bnu}}{}^3
\]
are then linearly-independent, and independent of those in column one. (In
the preceding equation, we substitute for ${\widetilde{\bpi}}^8$ using
equation~\eqref{lindep2}.) Therefore $s_2^{\prime} = 6$. If we then
consider the linear combination of $\alpha$ times column two and $\beta$
times column four of~\eqref{tableau}, we gain the $1$-forms
\[
\alpha {\widetilde{\bmu}}^2 + \beta {\widetilde{\bmu}}^4,
\quad \alpha {\widetilde{\bpi}}^3 - \beta ( {\widetilde{\bro}}^4 + {\widetilde{\bpi}}^8 ),
\quad \alpha ( {\widetilde{\bro}}^2 + {\widetilde{\bpi}}{}^4 ) + \beta {\widetilde{\bpi}}^7,
\quad \alpha ( {\widetilde{\bro}}^1 + {\widetilde{\bpi}}^2 ) - \beta ( {\widetilde{\bro}}^3 + {\widetilde{\bpi}}^6 ),
\quad \alpha {\widetilde{\bnu}}^2 + \beta {\widetilde{\bnu}}^4.
\]
If we then take $\alpha, \beta$ both non-zero, this gives five more
linearly-independent $1$-forms. Therefore $s_3^{\prime} = 5$. Finally,
$s_1^{\prime} + s_2^{\prime} + s_3^{\prime} + s_4^{\prime} = 19$, the
number of linearly-independent $1$-forms in $\pi$, which fixes
$s_4^{\prime} = 2$.

Note that the above is equivalent to taking
\[
\sigma = \left(\begin{array}{c c c c} 1 &0 &0 &* \\ 0 &0 &\alpha &* \\ 0 &1 &0 &* \\ 0 &0 &\beta &* \end{array}\right),
\]
where the last column is only constrained by the requirement that $\sigma$
be non-singular.
\end{proof}

\begin{proposition}
The Pfaffian differential system $(\widetilde{\mathcal{I}},
\widetilde{\bOm})$ is involutive at $p$.
\end{proposition}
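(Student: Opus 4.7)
The plan is to apply Cartan's test for involutivity of a linear Pfaffian system directly: a linear Pfaffian system whose essential torsion vanishes is involutive at $p$ if and only if its tableau is involutive, and the tableau is involutive if and only if the dimension of its first prolongation equals $s_1' + 2 s_2' + 3 s_3' + 4 s_4'$ (this is the content of Proposition~\ref{5.16} together with the standard Cartan's test for tableaux, as reviewed in Appendix~\ref{app:XDS}).

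First I would note that the essential torsion of $(\widetilde{\mathcal{I}}, \widetilde{\bOm})$ has already been shown to vanish on the open neighbourhood $U \subset S$ of $p$: this is precisely the content of the linear-dependence relation~\eqref{lindep2}, which was obtained by absorbing all the coefficients $\Psi_a$ into a redefinition of the forms $\widetilde{\bpi}{}^\alpha$ and $\widetilde{\bro}{}^a$ (details being relegated to Appendix~\ref{sec:absorb2}). Thus the hypotheses of Proposition~\ref{5.16} are met, and involutivity of $(\widetilde{\mathcal{I}}, \widetilde{\bOm})$ at $p$ reduces to involutivity of the tableau $A_p$.

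Next I would carry out the arithmetic of Cartan's test for the tableau. Using the reduced Cartan characters computed in the preceding proposition,
\[
s_1' + 2 s_2' + 3 s_3' + 4 s_4' = 6 + 2 \cdot 6 + 3 \cdot 5 + 4 \cdot 2 = 41.
\]
By Proposition~\ref{prop41}, the first prolongation $A_p^{(1)}$ is an affine-linear space of dimension $41$. Since in general $\dim A_p^{(1)} \le s_1' + 2 s_2' + 3 s_3' + 4 s_4'$, with equality characterising involutivity of the tableau, we conclude that $A_p$ is involutive, and hence the Pfaffian system $(\widetilde{\mathcal{I}}, \widetilde{\bOm})$ is involutive at $p$.

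The only genuine content, all already established elsewhere, is the two non-trivial inputs feeding into the test: the vanishing of the essential torsion (which required the explicit absorption argument and use of the constraint~\eqref{constraint1} via~\eqref{lindep}), and the count $\dim A^{(1)}_p = 41$. Given those, the proof here is a one-line verification of the Cartan equality. The main potential obstacle would have been a mismatch between $\dim A_p^{(1)}$ and the character sum, which would have forced us to prolong a further time; happily the two numbers coincide, so no further prolongation is needed before invoking the Cartan--K\"ahler theorem in the sequel.
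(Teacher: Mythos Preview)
Your proposal is correct and follows exactly the same route as the paper: the paper's own proof is literally the single displayed equation $s_1' + 2 s_2' + 3 s_3' + 4 s_4' = 6 + 12 + 15 + 8 = 41 = \dim A_p^{(1)}$, relying on the previously established vanishing of the essential torsion and Proposition~\ref{prop41}. Your write-up simply makes explicit the invocation of Proposition~\ref{5.16} that the paper leaves implicit.
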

\begin{proof}
\[
s_1^{\prime} + 2 s_2^{\prime} + 3 s_3^{\prime} + 4 s_4^{\prime} = 6 + 12 + 15 + 8 = 41 = \dim A_p^{(1)}.
\]
\end{proof}

\begin{theorem}
Let $X$ be an analytic manifold, and $\mathbf{g}$ an analytic Riemannian
metric on $X$. For each $x \in X$, there exists a neighbourhood of $x$ on
which there exists an analytic coordinate system in terms of which the
metric $\mathbf{g}$ takes block-diagonal form.
\end{theorem}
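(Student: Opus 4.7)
The plan is to apply the Cartan--K\"ahler theorem to the involutive analytic Pfaffian system $(\widetilde{\mathcal{I}}, \widetilde{\bOm})$ constructed on $S$ in the previous subsection, and then to translate the resulting integral manifold back into an orthonormal coframe on a neighbourhood of $x$ that satisfies the relations~\eqref{dep}. Proposition~\ref{prop:1} will then deliver the block-diagonalising coordinates.

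First I would fix $x \in X$ and exhibit a point $p \in S$ lying over $x$ at which the preceding involutivity analysis applies. This amounts to choosing $g \in \mathrm{SO}(4)$ and values $y^1, \dots, y^8, z^1, \dots, z^4$ on the fibre so that the constraint~\eqref{constraint1} holds, every $y^\alpha$ is non-zero (the open condition under which the essential torsion was absorbed and involutivity was established), and $dT(p) \neq 0$. Since~\eqref{constraint1} is affine in each $y^\alpha$ separately and does not involve the $z^a$, one may, for generic $g$, solve for a single $y^\alpha$ in terms of the remaining seven while keeping them all non-zero; the remark following the definition of $S$ guarantees $dT(p) \neq 0$ at generic such $p$. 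A suitable $p$ therefore exists over every point of $X$.

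Next I would invoke the analytic Cartan--K\"ahler theorem. Analyticity of $\mathbf{g}$ lifts to analyticity of all the structure forms on $\FX$, on $M^{(1)}$, and on the codimension-one submanifold $S$. Since $(\widetilde{\mathcal{I}}, \widetilde{\bOm})$ is involutive at $p$ with reduced Cartan characters $(s_1^{\prime}, s_2^{\prime}, s_3^{\prime}, s_4^{\prime}) = (6, 6, 5, 2)$, Cartan--K\"ahler produces an analytic four-dimensional integral manifold $j \colon N \hookrightarrow S$ through $p$ along which every $\widetilde{\bth}^a{}_b$ pulls back to zero while $\widetilde{\bom}^1 \wedge \cdots \wedge \widetilde{\bom}^4$ remains non-vanishing.

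Finally I would transfer $N$ back to $X$. The natural map $N \to X$ obtained by composing $j$ with the projections $S \hookrightarrow M^{(1)} \to X$ is analytic, and its differential at $p$ is an isomorphism by the independence condition, so after shrinking $N$ we may present it as the graph of an analytic section $f \colon U \to M^{(1)}$ of $M^{(1)} \to X$ defined on an open neighbourhood $U$ of $x$. Pulling back the tautological $1$-forms yields an analytic orthonormal coframe $\bep^a := f^* \bom^a$ on $U$. The vanishing of $\widetilde{\bth}^a{}_b$ along $N$ translates into $f^* {\bom}^a{}_b = \sum_c (\llambda{c}{a}{b} \circ f)\, \bep^c$ with the $\lambda$-values obeying~\eqref{LambdaConditions}, which is precisely the algebraic content of $f^* \boldsymbol{\Theta}^i = 0$; hence $\bep^a$ satisfies~\eqref{dep} and Proposition~\ref{prop:1} furnishes an analytic coordinate system near $x$ in which $\mathbf{g}$ is block diagonal. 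The substantive difficulty of the whole argument, namely the non-involutivity of the original system on $\FX$, has already been overcome by prolongation, torsion absorption, and restriction to $S$; what remains at this final stage is little more than bookkeeping assembled by Cartan--K\"ahler.
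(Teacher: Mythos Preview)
Your proposal is correct and follows essentially the same route as the paper: choose a generic point $p \in S$ lying over $x$, apply Cartan--K\"{a}hler to the involutive system $(\widetilde{\mathcal{I}}, \widetilde{\bOm})$ to produce an integral manifold through $p$, and read off from it a local section whose tautological pull-backs give an orthonormal coframe satisfying~\eqref{dep}. You have simply spelled out in more detail than the paper why a suitable $p$ exists over every $x$, why the analyticity hypothesis propagates to $S$, and why the independence condition lets one present the integral manifold as the graph of a section of $M^{(1)} \to X$.
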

\begin{proof}
Given any point $x \in M$, choose a generic point $p \in \pi^{-1}(x) \in
S$. By the previous Proposition, the system $(\widetilde{\mathcal{I}},
\widetilde{\bOm})$ is involutive. Applying the Cartan--K\"{a}hler theorem
(cf. Remark~\ref{PfaffianCK}), we deduce that there exists an integral
manifold of the {\XDSIC} $(\widetilde{\mathcal{I}}, \widetilde{\Omega})$
through $p$. This integral manifold corresponds to a section $f \colon X
\rightarrow S$ and hence to an orthonormal coframe $\{ {\bep}^i \}$ on a
neighbourhood of $x$ that obeys equation~\eqref{dep}.
\end{proof}

\begin{remark}
The solution to~\eqref{dep} is not unique but one has the freedom to independently make rotations in the $(\bep^1, \bep^2)$ and $(\bep^3, \bep^4)$ planes (equivalently, in the $(t, x)$ and $(y, z)$ planes of the proof of Proposition~\ref{prop:1}). This corresponds to the freedom to make rotations in the $({\bom}^1,{\bom}^2)$ and $({\bom}^3, {\bom}^4)$ planes without changing $(\mathcal{I}, \bOm)$. As a result the characteristic manifold is parameterised by two functions of four variables, consistent with the result that $s'_4 = 2$.
\end{remark}

\begin{remark}
The coordinate functions $\llambda{a}{b}{c}$ pull back to define functions
on $X$ that give the components, $\{ \boldsymbol{\Gamma}^a{}_b \}$, of the
Levi-Civita connection of the coframe $\{ {\bep}^a \}$. The curvature of
$\boldsymbol{\Gamma}$, $\mathbf{R}^{\boldsymbol{\Gamma}}$, then
automatically obeys the condition that
\begin{align}
R^{\boldsymbol{\Gamma}}_{1234} + \GGamma{1}{2}{3} \left( \GGamma{2}{2}{4} - \GGamma{1}{1}{4} \right)
&+ \GGamma{1}{2}{4} \left( \GGamma{1}{1}{3} - \GGamma{2}{2}{3} \right)
\nonumber
\\
&+ \GGamma{3}{4}{1} \left( \GGamma{4}{4}{2} - \GGamma{3}{3}{2} \right)
+ \GGamma{3}{4}{2} \left( \GGamma{3}{3}{1} - \GGamma{4}{4}{1} \right) = 0.
\label{curvcondition}
\end{align}
In the present context, this condition is derived from pulling back the
condition $T(p) = 0$ that was required for our Pfaffian system on $S$ to
have integral elements. However, it can also be shown that this condition
arises directly from the symmetry requirements on the Levi-Civita
connection (analogous to~\eqref{LambdaConditions}) that follow from
imposing~\eqref{dep}.

It turns out that~\eqref{curvcondition} has a simple geometrical
interpretation. Let $R^{\perp}_{1234}$ denote the curvature of the
connection of the bundle normal to the ${\bep}^1\wedge{\bep}^2$
plane. This is related to the full curvature and the associated
fundamental form $A_{\mathbf{U}}$ by the Ricci equation
\[
\mathbf{g} \left( \mathbf{R}^{\perp}(\mathbf{X}, \mathbf{Y})\mathbf{V}, \mathbf{U} \right) =
\mathbf{g} \left( \mathbf{R}(\mathbf{X}, \mathbf{Y})\mathbf{V}, \mathbf{U} \right)
- \mathbf{g} \left( \left[ A_{\mathbf{U}}, A_{\mathbf{V}} \right] \mathbf{X}, \mathbf{Y} \right).
\]
In the same way one can use the Ricci equation to obtain an
expression for the curvature $\tilde R^{\perp}_{3412}$ of the connection of
the bundle normal
to the ${\bep}^3\wedge{\bep}^4$ plane. Then by adding the expressions for the
two normal curvatures together one may write the curvature
condition~\eqref{curvcondition} in the alternative form
\begin{equation}
R^{\perp}_{1234}+\tilde R^{\perp}_{3412}=R_{1234}.
\label{curvcond}
\end{equation}
So that the full curvature is just the sum of the two normal curvatures.
\end{remark}

\subsection{The Lorentzian case}
Although we have carried out all of our calculations for the case of a
Riemannian four-manifold, the calculations carry through, essentially
unchanged, if the metric has Lorentzian signature. We can easily obtain the
geometric condition corresponding to~\eqref{curvcond} by using the
Newman--Penrose null formalism (see e.g.~\cite{PR1}). We start by
introducing a (complex) basis of null 1-forms
$(\boldsymbol{\ell}, \mathbf{n}, \mathbf{m}, \overline{\mathbf{m}})$. Then
in terms of this basis the condition~\eqref{dep} that the metric can be
block diagonalised is given by
\begin{align*}
\boldsymbol{\ell} \wedge \mathbf{n} \wedge d\boldsymbol{\ell} &= 0, \\
\boldsymbol{\ell} \wedge \mathbf{n} \wedge d\mathbf{n} &= 0, \\
\mathbf{m} \wedge \overline{\mathbf{m}} \wedge d\mathbf{m} &= 0, \\
\mathbf{m} \wedge \overline{\mathbf{m}} \wedge d\overline{\mathbf{m}} &= 0.
\label{nullform}
\end{align*}
From equation~(4.13.44) in~\cite{PR1}, the above conditions result in reality
constraints on the spin coefficients given by
\begin{equation}
\rho = \overline{\rho}, \qquad \rho' = \overline{\rho'}, \qquad \overline{\tau'} = \tau, \qquad \tau' = \overline{\tau}.
\label{reality}
\end{equation}
We now make use of the Newman--Penrose equations (4.11.12) in~\cite{PR1} to
obtain the equation
\begin{align*}
D'\rho - \delta' \tau + D\rho' - \delta \tau' &= 2 \rho \rho' - \left( \tau \overline{\tau} + \tau' \overline{\tau'} \right) + \rho(\gamma+\overline{\gamma}) + \rho'(\gamma'+\overline{\gamma'})- \left( \tau(\alpha+\overline{\alpha'}) + \overline{\tau}(\alpha'+\overline{\alpha}) \right)
\\
& \hskip 2cm -4\Lambda - 2 \left( \Psi_2 + \kappa \kappa' - \sigma \sigma' \right).
\end{align*}
Because of the reality conditions on the spin coefficients~\eqref{reality}, we
see that the imaginary part of the left hand side of this equation must
vanish. Similarly all the terms but the final one on the right hand side are
real and have vanishing imaginary part. It must therefore be the case
that the final term also has vanishing imaginary part so that
\begin{equation}
\mathrm{Im} \left( \Psi_2 + \kappa \kappa' - \sigma \sigma' \right) = 0.
\label{NPconstraint}
\end{equation}
Therefore, our block-diagonalisation condition necessarily implies that
this constraint must be satisfied. Note that both \eqref{reality} and
\eqref{NPconstraint} are invariant under spin and boost transformations
which reflects the fact that  the $2$-forms $\boldsymbol{\ell} \wedge
\mathbf{n}$ and $\mathbf{m} \wedge \overline{\mathbf{m}}$ are invariant
under such transformations.

To relate this condition to equation~\eqref{curvcond} above we introduce the
complex curvature of the surface spanned by $\mathbf{m} \wedge
\overline{\mathbf{m}}$ which is given by the formula
\[
K = \sigma \sigma' - \Psi_2 - \rho \rho' + \Phi_{11} + \Lambda.
\]
Twice the real part of this gives the Gaussian curvature while twice
the imaginary part
gives the curvature of the connection of the normal bundle, which in view
of the reality conditions on the spin coefficients is given by
\[
\mathrm{Im}\, K=\mathrm{Im}\, \left(\sigma\sigma' -\Psi_2\right)
\]
The corresponding curvature of the connection of the normal bundle to
$\boldsymbol{\ell} \wedge \mathbf{n}$ is obtained by applying the Sachs
$*$-operation (which has
the effect of swapping $\mathbf{m} \wedge \overline{\mathbf{m}}$ with
$\boldsymbol{\ell} \wedge \mathbf{n}$). Under this operation we have
\[
\sigma^*=-\kappa, \qquad \sigma'^*=\kappa', \qquad \Psi_2^*=\Psi_2,
\]
so that the normal curvature is this time given by
\[
\mathrm{Im}\, K^*=\mathrm{Im}\, \left(-\kappa\kappa' -\Psi_2\right)
\]
Finally we note that the full curvature for the orthonormal frame
corresponding to the Newman--Penrose null tetrad is given by
$R_{TXYZ}=-2\, \mathrm{Im}\, \Psi_2$. Hence condition~\eqref{curvcond} becomes
\[
\mathrm{Im}\, K+\mathrm{Im}\, K^*=\mathrm{Im}\, \Psi_2.
\]
Substituting for $\mathrm{Im}\, K$ and $\mathrm{Im}\, K^*$ we again obtain
equation~\eqref{NPconstraint}. Therefore, the constraint obtained from the
Newman--Penrose equations agrees with that obtained from the prolongation
process.

\vs
Finally, with reference to Remark~\ref{rem:cfl}, it should be noted that
the constraints~\eqref{curvcondition} and~\eqref{NPconstraint} that have
arisen via the prolongation procedure are both preserved under
conformal transformations of the metric, $\mathbf{g}$.
This is, again, a manifestation of the fact that our problem
is actually a problem in conformal, rather than Riemannian/Lorentzian, geometry.

\section{Doubly biorthogonal coordinates}
\label{sec:double}

The problem of diagonalising a metric in $3$-dimensions is equivalent to
that of finding three families of $2$-surfaces
\[
f^i(x^1, x^2, x^3) = c^i, \qquad i = 1, 2, 3
\]
that are mutually orthogonal. Given such \lq triply orthogonal\rq\ surfaces
the change of coordinates
\[
x^{i'} = f^i(x^1, x^2, x^3)
\]
brings the metric to diagonal form. Darboux~\cite{Darboux} (see also
Eisenhart~\cite{Eisenhart})) was able to find all triply orthogonal systems
for the flat metric by first giving a condition on two families of
$2$-surfaces that guaranteed the existence of a third family orthogonal to
both.

Let
\begin{align*}
f(x, y, z) &= a = \hbox{constant} ,
\\
g(x, y, z) &= b = \hbox{constant}
\end{align*}
be two $1$-parameter families of $2$-surfaces $S^1_a$ and $S^2_b$. The
normal $1$-form to $S^1_a$ is $df$ and the normal $1$-form to $S^2_b$ is
$dg$. We require these to be orthogonal so that
\begin{equation}
\mathbf{g}(df, dg) = 0.
\label{4.3}
\end{equation}
We now construct a $1$-form {\bom} orthogonal to both $S^1_a$ and $S^2_b$
\begin{equation}
{\bom} = \star \left( df \wedge dg \right).
\label{4.4}
\end{equation}
In order for there to be a $2$-surface mutually orthogonal to both $S^1_a$
and $S^2_b$ we require {\bom} to be surface forming and hence
\begin{equation}
d{\bom} \wedge {\bom} = 0.
\label{4.5}
\end{equation}
Substituting for~\eqref{4.4} into~\eqref{4.5} gives the condition
\begin{equation}
d \left( \star \left( df \wedge dg \right) \right) \wedge \left( df \wedge dg \right) = 0.
\label{4.6}
\end{equation}
When written out in components~\eqref{4.6} takes the form
\[
\epsilon^{cab} \epsilon_{cde} \{ (\nabla_b \nabla^d f) (\nabla^e g) +
(\nabla^d f)(\nabla_b \nabla^e g) \} \epsilon_{akl} (\nabla^k f) (\nabla^l g) = 0,
\]
which can be simplified to read
\begin{equation}
\epsilon^{abc} \nabla_b f \nabla_c g
\left[ (\nabla^d g)(\nabla_d \nabla_a f) - (\nabla^d f)(\nabla_d \nabla_a g) \right] = 0.
\label{4.7}
\end{equation}
On the other hand differentiating~\eqref{4.3} gives
\begin{equation}
(\nabla_b \nabla^a f)(\nabla_b g) + (\nabla_a f)(\nabla_b \nabla^a g) = 0.
\label{4.8}
\end{equation}
We can now use~\eqref{4.8} to replace the second derivatives of $g$
in~\eqref{4.7} by second derivatives of $f$ to obtain:
\[
\epsilon^{abc} (\nabla_b f) (\nabla_c g) (\nabla^d g) (\nabla_d \nabla_a f) = 0.
\]
Now since $\nabla^dg$ is normal to $S^2_b$, it is tangent to $S^1_a$. Hence
if we are given some function $f$ that defines a family of surfaces
$S^1_a$, any surface $S^2_b$ that intersects it orthogonally with the
mutually orthogonal direction surface forming, must intersect $S^1_a$ in a
line with tangent direction $X^a$ that satisfies
\begin{equation}
\epsilon^{abc} (\nabla_b f) X_c X^d (\nabla_d \nabla_a f) = 0.
\label{4.10}
\end{equation}
This is just the classical result that the surfaces intersect in lines of
curvature~\cite{Darboux, Eisenhart}.

The significant point about this is that given $f$ we can
solve~\eqref{4.10} to give $X^a$ algebraically in terms of first and second
derivatives of $f$. Since $X^a$ is tangent to both $S^1_a$ and $S^2_b$ it
is normal to the third surface and must satisfy the surface orthogonal
condition
\[
\epsilon^{abc} (\nabla_a X_b) X_c = 0.
\]
Substituting for $X^a$ we obtain a third-order partial differential
equation for $f$; the Darboux equation~\cite{Darboux}, see also
Eisenhart~\cite{Eisenhart} for details.

We see from the above that the coordinate surface of a triply orthogonal
system must satisfy Darboux's equation. Conversely, given a solution $f(x,
y, z)$ of the Darboux equation one can calculate the lines of curvature of
the surfaces $S^1_a$ given by $f(x, y, z)=a$, and then find an orthogonal
family of surfaces $S^2_b$ which intersects $S^1_a$ orthogonally along
these lines. One then knows that the direction orthogonal to both normals
is surface orthogonal and hence one has a triply orthogonal system of
surfaces. (Note in practice it is often simpler to perform the last two
steps in the opposite order.) Hence all triply orthogonal surface are
determined by solutions to the third-order Darboux partial differential
equation.

In the case of \lq doubly biorthogonal\rq\ coordinate systems we proceed in
a similar manner. We first ask when there exists a family of two surfaces
orthogonal to a given two-parameter family of $2$-surfaces.

Let the given two-parameter family of two surfaces $S_{a, b}$ be given by
\[
f(x, y, z, w) = a,
\qquad
g(x, y, z, w) = b.
\]
Since $df$ and $dg$ are both co-normals to $S$ we require ${\bom} = \star
\left( df \wedge dg \right)$ to be surface-orthogonal. By the Frobenius
theorem this is the condition
\[
\left( \star d{\bom} \right) \wedge \star {\bom} = 0,
\]
which, in components, takes the form
\begin{equation}
\epsilon^{ijkl} (\nabla_j f) (\nabla_k g) \left\{ (\nabla_m f)(\nabla^m \nabla_l g) - (\nabla_m g) (\nabla^m \nabla_l f) \right\} = 0.
\label{4.14}
\end{equation}
If one contracts~\eqref{4.14} with $\nabla_i f$ or $\nabla_i g$ then the
expression vanishes whatever the value of the final term. On the other hand
if one contracts it with an element $\mu_i$ that is not in the linear span
of $\nabla_i f$ and $\nabla_i g$ then $Y^i = \epsilon^{ijkl} \mu_i \nabla_j
f \nabla_k g$ is a non-zero vector orthogonal to $\nabla_i f$ and $\nabla_i
g$. Furthermore any vector $Y^i$ orthogonal to $\nabla_i f$ and $\nabla_i
g$ can be obtained in this way by choosing $\mu_i$ suitably. Hence we
require
\begin{equation}
Y^i \left\{ (\nabla_j f)(\nabla^j \nabla_i g) - (\nabla_j g)(\nabla^j \nabla_i f) \right\} = 0
\mbox{ for all $Y^i$ such that } Y^i \nabla_i f = Y^i \nabla_i g = 0.
\label{4.15}
\end{equation}
This gives a pair of coupled second-order equations for $f$ and $g$. Note
that, unlike the case of triply orthogonal systems, $g^{ij} \nabla_i f
\nabla_j g \neq 0$ in general since we cannot be expected to diagonalise
one of the $2 \times 2$ blocks as well as obtain block diagonal form (this
would involve setting five terms in the metric to zero). Hence there is no
possibility of eliminating the second derivative of $g$ in favour of
derivatives of $f$ as was done in three dimensions. Indeed~\eqref{4.15}
implies~\eqref{4.14} and hence that ${\bom} = \star \left( df \wedge dg
\right)$ is surface orthogonal. Thus~\eqref{4.15} is a necessary and
sufficient condition for the existence of a doubly biorthogonal coordinate
system.

\begin{proposition}
All doubly biorthogonal systems are determined by solutions to the pair of
coupled second-order partial differential equations
\[
Y^i \left\{ (\nabla_j f) (\nabla^j \nabla_i g) - (\nabla_j g) (\nabla^j \nabla_i f) \right\} = 0
\mbox{ for all $Y^i$ such that } Y^i \nabla_i f = Y^i \nabla_i g = 0.
\]
\end{proposition}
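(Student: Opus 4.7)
The plan is to read off the proof from the analysis carried out just before the statement, and to package it as a two-way equivalence: given a doubly biorthogonal coordinate system $(x^1, x^2, x^3, x^4)$ for $\mathbf{g}$, set $f := x^1$ and $g := x^2$, so that the common level sets $\{f = a, g = b\}$ constitute the two-parameter family $S_{a, b}$ of $2$-surfaces whose normal $2$-plane is cospanned by $df$ and $dg$; conversely, from any solution $(f, g)$ of the stated PDE system I will reconstruct coordinates $(f, g, h, k)$ in which $\mathbf{g}$ is block diagonal. The key point is that both directions reduce to a single Frobenius integrability criterion for the $2$-plane field orthogonal to $\mathrm{span}(\nabla f, \nabla g)$.

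First I would reformulate the existence of an orthogonal complementary family of $2$-surfaces as the integrability of this orthogonal $2$-plane field. Since $df \wedge dg$ is a decomposable $2$-form taking values in $\Lambda^2 \, \mathrm{span}(\nabla f, \nabla g)^{\vee}$, its Hodge dual ${\bom} := \star (df \wedge dg)$ is also decomposable, and its kernel is precisely the $2$-plane orthogonal to $\nabla f$ and $\nabla g$. The required orthogonal family of $2$-surfaces exists locally if and only if this kernel distribution is integrable, which by Frobenius is the condition $d \bom \wedge \bom = 0$. Expanded in components this is equation~\eqref{4.14}.

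The main step is to show that~\eqref{4.14} is equivalent to the displayed condition~\eqref{4.15}. Regarding the bracketed expression in~\eqref{4.14} as a covector $E_i$ on $X$, antisymmetry of $\epsilon^{ijkl}$ forces $E_i \nabla^i f = E_i \nabla^i g = 0$ identically, so $E$ lies in the $2$-dimensional subspace orthogonal to $\nabla f$ and $\nabla g$. Parameterising an arbitrary vector $Y^i$ in that orthogonal subspace in the form $Y^i = \epsilon^{ijkl} \mu_j \nabla_k f \nabla_l g$ for arbitrary $\mu_j$ exhibits the pairing $Y^i E_i$ as precisely~\eqref{4.14} contracted against $\mu$. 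Hence the vanishing of~\eqref{4.14} is the vanishing of $Y^i E_i$ for all $Y$ with $Y^i \nabla_i f = Y^i \nabla_i g = 0$, which is~\eqref{4.15}. The delicate bookkeeping step, and the point that will require a careful remark, is that unlike the triply orthogonal case treated by Darboux one cannot further contract with~\eqref{4.3} to eliminate second derivatives of $g$ in favour of those of $f$, because $g^{ij} \nabla_i f \nabla_j g$ need not vanish for a block-diagonal metric; this is why the present proposition produces a coupled pair of second-order PDEs rather than a single higher-order equation.

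Finally, in the converse direction I would invoke the Frobenius theorem on the integrable orthogonal distribution to produce functions $h, k$ whose common level sets are the leaves, and verify that the coordinate chart $(f, g, h, k)$ is doubly biorthogonal. By construction, $\partial_h$ and $\partial_k$ span the leaves and are therefore orthogonal to both $\nabla f$ and $\nabla g$, so the mixed blocks $g(\partial_f, \partial_h)$, $g(\partial_f, \partial_k)$, $g(\partial_g, \partial_h)$, $g(\partial_g, \partial_k)$ all vanish, giving the block-diagonal form. The main obstacle is the equivalence step in the previous paragraph; once that is in hand both directions of the proposition follow immediately from Frobenius.
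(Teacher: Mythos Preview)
Your overall strategy matches the paper's: reduce to Frobenius integrability of the $2$-plane field $\mathrm{span}(\nabla f,\nabla g)$ via $\bom=\star(df\wedge dg)$, derive~\eqref{4.14}, and then show its equivalence with~\eqref{4.15}. But there is a genuine gap at the first step.

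The Frobenius condition you write, $d\bom\wedge\bom=0$, is a $5$-form on a $4$-manifold and therefore vanishes identically; it carries no information. You have transplanted the $3$-dimensional condition~\eqref{4.5}, where $\bom$ is a $1$-form, without adjusting for the fact that here $\bom$ is a $2$-form. The paper uses $(\star d\bom)\wedge\star\bom=0$, a $3$-form equation whose four components reduce (after the two automatic contractions with $\nabla_i f$ and $\nabla_i g$) to exactly the pair of second-order equations claimed. Without a correct integrability criterion here, the derivation of~\eqref{4.14} collapses.

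Two smaller slips: your description of $\ker\bom$ is inverted---the kernel of $\star(df\wedge dg)$ is $\mathrm{span}(\nabla f,\nabla g)$, not its orthogonal complement (the latter is the tangent distribution to the given $S_{a,b}$, which is trivially integrable). And the claim ``antisymmetry of $\epsilon^{ijkl}$ forces $E_i\nabla^i f=E_i\nabla^i g=0$'' is not right: the bracketed covector $E$ is not itself orthogonal to $\nabla f,\nabla g$. What antisymmetry gives is that the free-index vector $V^i=\epsilon^{ijkl}\nabla_j f\,\nabla_k g\,E_l$ of~\eqref{4.14} is orthogonal to $\nabla f$ and $\nabla g$; your subsequent parametrisation $Y^l=\epsilon^{ijkl}\mu_i\nabla_j f\,\nabla_k g$ then correctly yields the equivalence with~\eqref{4.15}. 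So the equivalence step survives, but the opening Frobenius formulation must be repaired.
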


\appendix
\section{Results from the theory of exterior differential systems}
\label{app:XDS}

We now recall some standard definitions and results from the theory of
exterior differential systems. For more information, see~\cite{BCGGG}, the
terminology and notation of which we will generally follow.

\vs Throughout this section, let $M$ be an arbitrary smooth manifold of
dimension $n$. Let $\Omega^p(M)$ denote the space of $C^{\infty}$ sections
of $\bigwedge^p T^* M$ and $\Omega^*(M) := \bigoplus_{p=0}^n \Omega^p(M)$.

\vs

An \emph{\XDS\/}, $\mathcal{I}$, on $M$ consists of a two-sided,
homogeneous differential ideal, $\mathcal{I} \subset \Omega^*(M)$. In
particular, we have
\begin{itemize}
\item[$\bullet$] Given $\boldsymbol{\alpha} \in \mathcal{I}$, then
$\boldsymbol{\alpha} \wedge \boldsymbol{\beta} \in \mathcal{I}$ and
$\boldsymbol{\beta} \wedge \boldsymbol{\alpha} \in \mathcal{I}$ for all
$\boldsymbol{\beta} \in \Omega^*(M)$.
\item[$\bullet$] $\mathcal{I} = \bigoplus \mathcal{I}^q$ where
$\mathcal{I}^q := \mathcal{I} \cap \Omega^q(M)$ and, for any
$\boldsymbol{\alpha} \in \mathcal{I}$, the part of $\boldsymbol{\alpha} \in
\mathcal{I}$ lying in $\mathcal{I}^q$ also lies in $\mathcal{I}$, for $q =
0, \dots, n$.
\item[$\bullet$] For all $\boldsymbol{\alpha} \in \mathcal{I}$ we have
$d\boldsymbol{\alpha} \in \mathcal{I}$.
\end{itemize}
Given a point $x \in M$, a $k$-dimensional linear subspace $E_k \subseteq
T_x M$ (where $k \in \{ 1, \dots, n \}$) is an \emph{integral element of
$(\mathcal{I}, \bOm)$ (of dimension $k$) based at $x$\/} if
$\restrict{\bvar}{E_k} = 0$ for all ${\bvar} \in \mathcal{I}$, where
$\restrict{\boldsymbol{\alpha}}{E_k}$ denotes the restriction of a form
$\boldsymbol{\alpha}$ to $E_k$. The set of integral elements of
$\mathcal{I}$ of dimension $k$ is denoted $V_k(\mathcal{I})$.

An \emph{\XDSIC\/}, $(\mathcal{I}, \bOm)$, on $M$ consists of an {\XDS}
$\mathcal{I} \subset \Omega^*(M)$, and a non-vanishing differential form
$\bOm \in \Omega^p(M)$. Given a point $x \in M$, an $p$-dimensional linear
subspace $E_p \subseteq T_x M$ is an \emph{integral element of
$(\mathcal{I}, \bOm)$ based at $x$\/} if $\restrict{\bvar}{E_p} = 0$ for
all ${\bvar} \in \mathcal{I}$ and $\restrict{\bOm}{E_p} \neq 0$. The set of
integral elements of $(\mathcal{I}, \bOm)$ is denoted $V_p(\mathcal{I},
\bOm)$.

\begin{definition}
An \emph{integral manifold\/} of $(\mathcal{I}, \bOm)$ is an immersed
sub-manifold $i \colon N \rightarrow M$ with the property that $i^* \bvar = 0$,
for all ${\bvar} \in \mathcal{I}$, and $i^* {\bOm} \neq 0$. Equivalently,
$i_* \left( T_x N \right) \subset T_{i(x)} M$ should be an integral element
of $(\mathcal{I}, \bOm)$, for each $x \in N$.
\end{definition}

\begin{definition}
An \emph{integral flag of $(\mathcal{I}, \bOm)$ based at $x$} is a nested
sequence of subspaces $(0)_x \subset E_1 \subset E_2 \subset \dots \subset
E_p \subseteq T_x M$, with the properties that
\begin{itemize}
\item[$\bullet$] $E_k$ is of dimension $k$, for $k = 0, \dots, p-1$;
\item[$\bullet$] $E_p$ is an integral element of $(\mathcal{I}, \bOm)$.
\end{itemize}
\end{definition}

\begin{definition}
Let $\mathbf{e}_1, \dots, \mathbf{e}_k$ be a basis for $E_k \subseteq T_x
M$. The \emph{polar space\/} of $E$ is the vector space
\[
H(E) = \left\{\mathbf{v} \in T_x M: \vphantom{|^|} {\bvar}(\mathbf{v}, \mathbf{e}_1, \dots, \mathbf{e}_k) = 0 \mbox{ for all ${\bvar} \in \left. \mathcal{I}^{k+1} \right|_x$} \right\}.
\]
\label{defn:polarspace}
\end{definition}

\begin{definition}
Let $(0)_x \subset E_1 \subset E_2 \subset \dots \subset E_p \subseteq T_x
M$ be an integral flag of $(\mathcal{I}, \bOm)$ based at $x \in M$. We
define the integers $\{ c_k: k = -1, 0, \dots, p \}$ as follows:
\[
c_k = \begin{cases} 0 &k=-1, \\ \codim H(E_k) &k=0, \dots, p-1 \\ \dim M - p &k=p. \end{cases}
\]
\label{defn:A.3}
\end{definition}

\vs
We now quote the first half of Theorem~1.11 from Chapter~III of \cite{BCGGG}:

\begin{proposition}
Let $(\mathcal{I}, \bOm)$ be an {\XDSIC} on manifold $M$, where
$\mathcal{I}$ contains no non-zero forms of degree $0$. Let $(0)_x \subset
E_1 \subset E_2 \subset \dots \subset E_p \subset T_x M$ be an integral
flag of $(\mathcal{I}, \bOm)$. Then $V_p(\mathcal{I}, \bOm) \subseteq
Gr_p(TM)$ is of codimension at least $c_0 + c_1 + \dots + c_{p-1}$ at
$E_p$.
\end{proposition}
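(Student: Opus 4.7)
The plan is to parameterize $\mathrm{Gr}_p(TM)$ locally near $E_p$, linearize the equations cutting out $V_p(\mathcal{I}, \bOm)$, and extract a lower bound of $c_0 + c_1 + \cdots + c_{p-1}$ on the rank of the resulting Jacobian. Since the Zariski tangent space at $E_p$ contains the tangent space of any local analytic component of $V_p$ through $E_p$, and is itself contained in the kernel of that Jacobian, the Jacobian-rank lower bound translates directly into the desired codimension bound.

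First I would adapt a basis to the flag, choosing $\{\mathbf{e}_1, \ldots, \mathbf{e}_n\}$ for $T_x M$ with $E_k = \mathrm{span}\{\mathbf{e}_1, \ldots, \mathbf{e}_k\}$. Nearby $p$-planes are then parameterised by a matrix $(P^\mu_a)$, $a = 1, \ldots, p$, $\mu = p+1, \ldots, n$, via the spanning vectors $\tilde{\mathbf{e}}_a := \mathbf{e}_a + \sum_\mu P^\mu_a \mathbf{e}_\mu$. The defining conditions of $V_p$ are the equations $\bvar(\tilde{\mathbf{e}}_{a_1}, \ldots, \tilde{\mathbf{e}}_{a_k}) = 0$ for $\bvar \in \mathcal{I}^k|_x$ and $1 \le a_1 < \cdots < a_k \le p$, for $k = 1, \ldots, p$. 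Since $E_p$ is integral, each such function vanishes at $P = 0$, and its linearisation reads
\[
\sum_{i=1}^k \sum_\mu P^\mu_{a_i}\, \bvar(\mathbf{e}_{a_1}, \ldots, \mathbf{e}_\mu, \ldots, \mathbf{e}_{a_k}),
\]
with $\mathbf{e}_\mu$ inserted in the $i$-th slot.

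The key step is to exhibit a block lower-triangular structure inside this Jacobian. For each $k = 1, \ldots, p$, I would single out the rows indexed by the specific tuple $(a_1, \ldots, a_k) = (1, 2, \ldots, k)$ as $\bvar$ ranges over $\mathcal{I}^k|_x$. Each such row has nonzero entries only in columns $P^\mu_j$ with $j \le k$, so ordering rows by $k$ and columns by $a$ (both ascending) gives a block lower-triangular Jacobian whose $k$-th diagonal block consists of the entries $\bvar(\mathbf{e}_1, \ldots, \mathbf{e}_{k-1}, \mathbf{e}_\mu)$ as $\bvar \in \mathcal{I}^k|_x$ and $\mu > p$ vary. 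By the definition of the polar space, the linear map $\mathbf{v} \mapsto \bigl(\bvar \mapsto \bvar(\mathbf{e}_1, \ldots, \mathbf{e}_{k-1}, \mathbf{v})\bigr)$ has kernel $H(E_{k-1})$ and hence image of dimension $c_{k-1}$. Because integrality of $E_p$ forces $E_p \subseteq H(E_{k-1})$, restricting $\mathbf{v}$ to $\mathrm{span}\{\mathbf{e}_{p+1}, \ldots, \mathbf{e}_n\}$ preserves the full image, so the $k$-th diagonal block has rank exactly $c_{k-1}$. Block lower-triangularity then yields total Jacobian rank at least $c_0 + c_1 + \cdots + c_{p-1}$, which is the asserted codimension bound.

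The hard part is the combinatorial bookkeeping: one must verify that the row for $\bvar \in \mathcal{I}^k|_x$ with tuple $(1, \ldots, k)$ has all its nonzero linearisation entries in columns indexed by $a \le k$ (immediate from the displayed formula, since every $a_i \in \{1, \ldots, k\}$), and that the diagonal-block rank actually realises the codimension $c_{k-1}$ rather than being truncated by the restriction $\mu > p$ (this is precisely where the inclusion $E_p \subseteq H(E_{k-1})$ is needed, itself a consequence of the hypothesis that $\mathcal{I}$ contains no non-zero $0$-forms and hence has genuine integral elements). Once these two points are in hand, the rest is the standard lower bound on the rank of a block-triangular matrix by the sum of the ranks of its diagonal blocks.
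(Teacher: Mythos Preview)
The paper does not supply its own proof of this proposition: it is explicitly quoted, without argument, as the first half of Theorem~1.11 from Chapter~III of~\cite{BCGGG}. So there is nothing in the paper to compare against.

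Your argument is correct and is essentially the standard proof one finds in~\cite{BCGGG}. The adapted basis, the affine chart $P^\mu_a$ on the Grassmannian, the linearisation of the restriction equations, and the block lower-triangular organisation of the Jacobian by taking only the tuples $(1,\dots,k)$ are exactly the ingredients used there. Two small remarks: first, the statement concerns codimension in the total space $Gr_p(TM)$ rather than in a single fibre $Gr_p(T_xM)$, but since your computation already exhibits $c_0+\dots+c_{p-1}$ independent rows using only the $P$-columns, adding the base-direction columns can only increase the Jacobian rank, so the bound survives; second, the inclusion $E_p \subseteq H(E_{k-1})$ follows directly from the integrality of $E_p$ (any $\mathbf{v}\in E_p$ makes $\bvar(\mathbf{e}_1,\dots,\mathbf{e}_{k-1},\mathbf{v})$ a value of $\restrict{\bvar}{E_p}=0$), and the no-$0$-forms hypothesis is only needed to ensure $H(E_0)=T_xM$ so that $c_0$ is well defined --- your parenthetical slightly conflates these two points, but the mathematics is sound.
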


If there exists a neighbourhood, $U$ of $E_p$ in $Gr_p(TM)$ such that
$V_p(\mathcal{I}, \bOm) \cap U$ is a smooth sub-manifold of codimension
$c_0 + c_1 + \dots + c_{p-1}$ in $U$ at $E_p$, then we say that the
integral flag $E_p$ \emph{passes Cartan's test}.

\vs
The key result is the following:

\begin{theorem}[Cartan--K\"{a}hler Theorem: \cite{BCGGG}, Chapter~III, Corollary~2.3]
\label{CKT}
Let $(\mathcal{I}, \bOm)$ be an analytic differential ideal on a manifold
$M$. Let $E_p \subset T_x M$ be an integral element of $(\mathcal{I},
\bOm)$ that passes Cartan's test. Then there exists an integral manifold of
$(\mathcal{I}, \bOm)$ through $x$, the tangent space to which, at $x$, is
$E_p$.
\end{theorem}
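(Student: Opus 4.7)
The plan is to prove the theorem by induction on $k$, building a nested sequence of analytic integral manifolds $x \in N_1 \subset N_2 \subset \dots \subset N_p$ with $\dim N_k = k$ and $T_x N_k = E_k$, where $(0)_x \subset E_1 \subset \dots \subset E_p$ is a flag witnessing that $E_p$ passes Cartan's test. At the final stage the independence condition $i^* \bOm \neq 0$ near $x$ is automatic from $\restrict{\bOm}{E_p} \neq 0$ and continuity, so the real content is purely ideal-theoretic. The base case $k=1$ is immediate because $\mathcal{I}$ contains no non-zero $0$-forms, so any analytic curve through $x$ tangent to $E_1$ is an integral manifold. The heart of the proof is the induction step from $N_k$ to $N_{k+1}$, which I would handle by a carefully set-up application of the Cauchy--Kovalevskaya theorem.

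For the induction step, I would choose analytic coordinates $(u^1, \dots, u^n)$ on $M$ centred at $x$ adapted to the flag, with $E_k = \spn\{ \partial/\partial u^1, \dots, \partial/\partial u^k \}$, and then seek $N_{k+1}$ as a graph $u^\alpha = \phi^\alpha(u^1, \dots, u^{k+1})$ for $\alpha > k+1$, with initial data on the hypersurface $\{ u^{k+1} = 0 \}$ given by the already-constructed $N_k$. Requiring this graph to annihilate a suitably chosen set of generators $\bvar_1, \dots, \bvar_r \in \mathcal{I}^{k+1}$ produces a first-order PDE system in the unknown functions $\phi^\alpha$. The polar-space description identifies $r = c_k = \codim H(E_k)$ as exactly the number of independent equations of this form, and the hypothesis that the flag passes Cartan's test — which upgrades the pointwise bound to the smooth statement $\codim V_{k+1}(\mathcal{I}) = c_0 + \dots + c_k$ near $E_{k+1}$ in the relevant Grassmann bundle — ensures that these $r$ equations can be solved for $c_k$ of the normal derivatives $\partial \phi^\alpha / \partial u^{k+1}$. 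The remaining $\phi^\alpha$ may be freely prescribed so that $T_x N_{k+1} = E_{k+1}$, yielding a determined Cauchy--Kovalevskaya system whose unique analytic solution delivers $N_{k+1}$.

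The main obstacle is showing that the $N_{k+1}$ produced in this way actually annihilates every element of $\mathcal{I}$, not merely the finitely many generators $\bvar_j$ used to set up the Cauchy problem. The standard device is to pull back an arbitrary $\bvar \in \mathcal{I}$ to $N_{k+1}$ and exploit $d \mathcal{I} \subset \mathcal{I}$: since the pull-back vanishes on the initial slice $N_k$ and since $d \bvar$ lies in the ideal, one derives a homogeneous linear system for $i^* \bvar$ whose unique analytic solution with vanishing Cauchy data is zero, forcing $i^* \bvar \equiv 0$ on a neighbourhood of $x$ in $N_{k+1}$. Analyticity is indispensable at two points — in producing the unique extension from Cauchy--Kovalevskaya, and in the uniqueness argument just described — which is exactly why the theorem is restricted to the analytic category. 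Iterating the induction from $k=1$ to $k=p$ then delivers the desired integral manifold of $(\mathcal{I}, \bOm)$ through $x$ whose tangent space at $x$ is $E_p$.
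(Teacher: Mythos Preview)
The paper does not prove this theorem: it is quoted in Appendix~\ref{app:XDS} as a standard result, with the citation \cite[Chapter~III, Corollary~2.3]{BCGGG} and no argument supplied. So there is no ``paper's own proof'' against which to compare your attempt. Your sketch is, in its broad outline, the classical inductive proof of the Cartan--K\"{a}hler theorem as given in \cite{BCGGG}: build the integral manifold one dimension at a time, using Cauchy--Kovalevskaya for the extension step and a uniqueness argument to show the full ideal --- not merely the chosen generators --- pulls back to zero.

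There is one genuine slip in your base case. You write that since $\mathcal{I}$ contains no non-zero $0$-forms, \emph{any} analytic curve through $x$ tangent to $E_1$ is an integral manifold. This does not follow: the absence of $0$-forms only guarantees that every \emph{point} is a $0$-dimensional integral manifold. If $\mathcal{I}^1 \neq 0$, a curve tangent to $E_1$ at $x$ will typically fail to annihilate the $1$-forms of $\mathcal{I}$ away from $x$. The correct base case is $k=0$, with $N_0 = \{x\}$, and the passage from $N_0$ to $N_1$ is already an instance of your induction step (with $c_0$ equations to solve). This is easily repaired, but as written the base case is incorrect.
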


\subsection{Linear Pfaffian systems}
A \emph{Pfaffian system} is an {\XDSIC}, $(\mathcal{I}, \bOm)$, on a
manifold $M$ such that $\mathcal{I}$ is generated, as a differential ideal,
by sections of a sub-bundle $I \subset T^* M$. (It is assumed that $I$ is
of constant rank, $s_0$.) The independence condition, $\bOm$, may be
characterised by a sub-bundle $J \subset T^* M$, with $I \subset J \subset
T^* M$ and $\rank J/I = n$, in which case $\bOm$ corresponds to a
non-vanishing section of $\wedge^n \left( J/I \right)$. Such a Pfaffian
system is \emph{linear\/} if
\[
dI \equiv 0 \mod{J}.
\]

Locally, we may choose a coframe $\{ {\bth}^1, \dots, {\bth}^{s_0},
{\bom}^1, \dots, {\bom}^n, {\bpi}^1, \dots, {\bpi}^t \}$ on $M$ such that
$I_x = \spn({\bth}^1, \dots, {\bth}^{s_0})$, $J_x = \spn({\bth}^1, \dots,
{\bth}^{s_0}, {\bom}^1, \dots, {\bom}^n)$. In this case, the condition that
the Pfaffian system be linear is that there exist functions
$A^a{}_{\varepsilon i}$, $c^a{}_{ij}$ on $M$ such that
\begin{equation}
d{\bth}^a \equiv \sum_{\varepsilon, i} A^a{}_{\varepsilon i} {\bpi}^{\varepsilon} \wedge {\bom}^i + \frac{1}{2} \sum_{i, j} c^a{}_{ij} {\bom}^i \wedge {\bom}^j \mod{\bth}.
\label{lPs}
\end{equation}
Under a change of coframe of the form
\begin{equation}
({\bth}^{\sigma}, {\bom}^i, {\bpi}^{\varepsilon}) \mapsto ({\bth}^{\sigma}, {\bom}^i, {\bpi}^{\varepsilon} + \sum_i p^{\varepsilon}{}_i {\bom}^i),
\label{changeofcoframe}
\end{equation}
the coefficients $c^a{}_{ij}$ transform according to the rule
\[
c^a{}_{ij} \mapsto c^a{}_{ij} + \sum_{\varepsilon} \left( A^a{}_{\varepsilon i} \, p^{\varepsilon}{}_j - A^a{}_{\varepsilon j} \, p^{\varepsilon}{}_i \right).
\]
We define two collections of coefficients $c^a{}_{ij}$,
$\widetilde{c}^a{}_{ij}$ to be equivalent if there exists parameters
$p^{\varepsilon}{}_i$ such that $\widetilde{c}^a{}_{ij} = c^a{}_{ij} +
\sum_{\varepsilon} \left( A^a{}_{\varepsilon i} \, p^{\varepsilon}{}_j -
A^a{}_{\varepsilon j} \, p^{\varepsilon}{}_i \right)$, and denote the
corresponding equivalence class of coefficients by $\left[ c
\right]$. $\left[ c \right]$ is the \emph{essential torsion\/} of the
linear Pfaffian system $(\mathcal{I}, \bOm)$. If it is possible to choose
the $p^{\varepsilon}{}_i$ such that $\widetilde{c}^a{}_{ij} = 0$
(i.e. there is no essential torsion) then we say that the torsion can be
\emph{absorbed}. Given a point $x \in M$, there exists an integral element
of $(\mathcal{I}, \bOm)$ based at $x$ if and only if $\left[ c \right](x) =
0$.

In the terminology of Olver~\cite[pp.~351]{Olver}, the \emph{degree of
indeterminacy\/}, $r^{(1)}$, of the above coframe is the number of the
number of solutions of the homogeneous problem
\[
\sum_{\varepsilon} \left( A^a{}_{\varepsilon i} \, p^{\varepsilon}{}_j - A^a{}_{\varepsilon j} \, p^{\varepsilon}{}_i \right) = 0.
\]
Equivalently, it is the number of transformations of the form~\eqref{changeofcoframe} that leave the structure equations~\eqref{lPs} unchanged.

If the torsion vanishes on an open neighbourhood, $U$, of $x$, then we write~\eqref{lPs} in the form
\begin{equation}
d{\bth}^a \equiv \sum_i {\bpi}^a{}_i \wedge {\bom}^i \mod{\bth},
\label{lPs2}
\end{equation}
where ${\bpi}^a{}_i \equiv \sum_{\varepsilon, i} A^a{}_{\varepsilon i} {\bpi}^{\varepsilon} \mod{\{ {\bth}, {\bom} \}}$.

\vs To determine the involutivity of a torsion-free linear Pfaffian system
at $x \in M$, we need to consider its \emph{tableau} $A_x$, which is a
linear subspace of $I_x^* \otimes \left( J_x / I_x \right)$. For our
purposes, however, it is simpler (but equivalent) to consider the
corresponding tableau matrix:
\begin{definition}
Given a linear Pfaffian system with structure equations as in~\eqref{lPs2}
and a point $x \in M$, the \emph{tableau matrix at $x$\/} is the $s_0
\times n$ matrix of elements of $T_x^* M / J_x$ given by
\[
{\pi}_x = \left( {\bpi}^a{}_i(x) \right) \mod{\{ {\bth}(x), {\bom}(x) \}}.
\]
The reduced Cartan characters, $s_1^{\prime}, \dots, s_4^{\prime}$, of the
tableau $A_x$ are defined by
\[
s_1^{\prime} + \dots + s_k^{\prime} = \mbox{ the number of linearly-independent $1$-forms in the first $k$ columns of ${\pi}_x$},
\]
for a generic choice of the $1$-forms $\{ {\bom}^i \}$.
\end{definition}

In order to check for involutivity of the system $(\mathcal{I}, \bOm)$ at
$x \in M$, we need to know the dimension of the first prolongation,
$A^{(1)}$, of the tableau $A_x$. We do not give a formal definition of
$A^{(1)}$, but content ourselves with the following characterisation, which
gives us sufficient information to calculate its dimension:
\begin{proposition}[\cite{IL}, Proposition 5.7.1]
\label{5.7.1}
Let $x \in M$ and ${\bpi}^a{}_i \in T^*_x M$ satisfy $d{\bth}^a \equiv
{\bpi}^a{}_i \wedge {\bom}^i \mod{\bth}$. Then the first prolongation,
$A^{(1)}$, of the tableau $A_x$ may be identified with the space of
$1$-forms ${\widetilde{\bpi}}{}^a{}_i \equiv {\bpi}^a{}_i \mod{\bth}$ such
that $d{\bth}^a \equiv {\widetilde{\bpi}}^a{}_i \wedge {\bom}^i
\mod{\bth}$.
\end{proposition}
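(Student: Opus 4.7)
The plan is to work pointwise at $x$ and reduce the statement to a piece of linear algebra: parametrize the admissible deformations $\widetilde{\bpi}{}^a{}_i$ of $\bpi^a{}_i$, extract the symmetry forced by the structure equation, and match the result against the formal definition of the first prolongation $A^{(1)}$ as $(A_x\otimes V^*)\cap (W\otimes S^2V^*)$, where $V=J_x/I_x$ and $W=T_x^*M/J_x$.

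First, I would unpack the congruence $\widetilde{\bpi}{}^a{}_i\equiv \bpi^a{}_i\mod{\bth}$ as meaning that, after absorbing the $\bth$--freedom that does not affect the tableau, any admissible replacement can be written at $x$ in the form
\[
\widetilde{\bpi}{}^a{}_i \;=\; \bpi^a{}_i \,+\, \sum_j p^a{}_{ij}\,{\bom}^j \pmod{\bth}
\]
for some constants $p^a{}_{ij}\in\mathbb{R}$. This parametrisation is exhaustive because the quotient $T_x^*M/\bth_x$ is spanned by the $\bom^i$ and the $\bpi^a{}_i$, and the tableau condition forces the $\widetilde{\bpi}{}^a{}_i$ to lie in the same affine class modulo $\bom$ as the originals.

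Second, I would subtract the two instances of the structure equation, obtaining
\[
0 \,\equiv\, \sum_i\bigl(\widetilde{\bpi}{}^a{}_i - \bpi^a{}_i\bigr)\wedge{\bom}^i \,=\, \sum_{i,j} p^a{}_{ij}\,{\bom}^j\wedge{\bom}^i \pmod{\bth}.
\]
Since $\{\bom^i\wedge\bom^j\}_{i<j}$ are linearly independent modulo $\bth$, this forces the symmetry relation $p^a{}_{ij}=p^a{}_{ji}$ for each $a$. Thus the space of admissible $\widetilde{\bpi}{}^a{}_i$, modulo the trivially-acting $\bth$--freedom, is in bijection with the tensors $p^a{}_{ij}\in W\otimes S^2V^*$ whose column-slices $j\mapsto (p^a{}_{ij})_{i,a}$ all lie in $A_x\subset W\otimes V^*$.

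Third, I would identify this space with $A^{(1)}$. By the formal definition, an element of $A^{(1)}$ is exactly a tensor $p\in W\otimes S^2V^*$ that also lies in $A_x\otimes V^*$ when one symmetrisation is relaxed; equivalently, for each fixed $j$ the matrix $p^a{}_{ij}$ defines an element of the tableau $A_x$. For any admissible $\widetilde{\bpi}$ arising as above, this tableau condition is automatic, since $\widetilde{\bpi}{}^a{}_i$ is itself a valid representative of the tableau row. Conversely, any $p\in A^{(1)}$ produces, via the same formula, an admissible $\widetilde{\bpi}$. This bijection gives the identification claimed in the proposition.

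The main obstacle is the last bookkeeping step: one must be careful that the correspondence $\widetilde{\bpi}\leftrightarrow p$ descends correctly to the quotient defining $A^{(1)}$, i.e.\ that the $\bth$--freedom in $\widetilde{\bpi}$ exactly matches the redundancy in the tableau representation and does not produce spurious elements of the prolongation. Everything else is routine linear algebra once the structure equation has been used to force symmetry.
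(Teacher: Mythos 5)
First, note that the paper does not actually prove this proposition: it is imported verbatim from \cite{IL}, so your argument can only be compared with the standard proof of that result, and against the way the paper uses it (the count of the degree of indeterminacy in Appendix~\ref{sec:absorb2}).

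Your first two steps are fine: writing $\widetilde{\bpi}{}^a{}_i=\bpi^a{}_i+\sum_j p^a{}_{ij}\,\bom^j$ modulo $\bth$ and subtracting the two structure equations does force $p^a{}_{ij}=p^a{}_{ji}$. The gap is in the third step, where you assert that the condition ``for each fixed $j$ the slice $(p^a{}_{ij})_{a,i}$ lies in the tableau $A_x$'' is \emph{automatic} ``since $\widetilde{\bpi}{}^a{}_i$ is itself a valid representative of the tableau row.'' This is not an argument, and it is false in general: the tableau matrix is only defined modulo $\{\bth,\bom\}$, so adding $\sum_j p^a{}_{ij}\bom^j$ does not change it at all, and nothing in your parametrisation constrains the slices. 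With arbitrary symmetric constants $p^a{}_{ij}$ you have identified the space of replacements with the full space of $W$-valued symmetric tensors, which strictly contains $A^{(1)}$ whenever $A_x$ is a proper subspace of the space of all $s_0\times n$ matrices --- as it certainly is for the system at hand, where the entries of the tableau matrix~\eqref{tableau2} satisfy many relations (repeated entries, only $19$ independent $1$-forms, plus~\eqref{lindep2}). The missing ingredient is precisely these symbol relations: the $\bpi^a{}_i$ are not independent $1$-forms but are of the form $\bpi^a{}_i=\sum_\varepsilon A^a{}_{\varepsilon i}\,\bpi^\varepsilon$ for a genuine coframe $\{\bth,\bom,\bpi^\varepsilon\}$, and the admissible replacements in the proposition are those of the same form, i.e.\ arising from $\bpi^\varepsilon\mapsto\bpi^\varepsilon+\sum_j p^\varepsilon{}_j\,\bom^j$ (equivalently, the $\widetilde{\bpi}{}^a{}_i$ must satisfy the same linear relations modulo $\{\bth,\bom\}$ as the $\bpi^a{}_i$). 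Then the difference slice for fixed $j$ is $\sum_\varepsilon A^a{}_{\varepsilon i}\,p^\varepsilon{}_j$, which visibly lies in $A_x$; symmetry in $i,j$ is the structure-equation condition; and linear independence of the tableau generators gives the bijection with $A^{(1)}=(A_x\otimes V^*)\cap(W\otimes S^2V^*)$, hence $\dim A^{(1)}=r^{(1)}$. This is exactly how the paper exploits the proposition in Appendix~\ref{sec:absorb2}: it counts modifications of the genuine coframe forms $\widetilde{\bpi}{}^\alpha,\widetilde{\bro}{}^a,\widetilde{\bmu}{}^a,\widetilde{\bnu}{}^a$ preserving~\eqref{structureequations}, not independent modifications of the individual matrix entries. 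Your closing remark about ``bookkeeping'' gestures at the quotient by the $\bth$-freedom, but the real issue is the symbol-relation constraint just described.
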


\begin{remark}
Proposition~\ref{5.7.1} implies that $\dim A^{(1)}$ is equal to the degree
of indeterminacy, $r^{(1)}$ of the coframe. Therefore, in this notation, a
Pfaffian system is involutive if it satisfies
\[
s_1^{\prime} + 2 s_2^{\prime} + \dots + n s_n^{\prime} = r^{(1)}.
\]
\end{remark}

\begin{proposition}[\cite{BCGGG}, pp.~318]
The first prolongation of the tableau, $A_x$, and the reduced Cartan
characters obey the inequality
\[
\dim A^{(1)} \le s_1^{\prime} + 2 s_2^{\prime} + \dots + n s_n^{\prime}.
\]
The tableau, $A_x$, is \emph{involutive\/} if equality holds in this equation.
\end{proposition}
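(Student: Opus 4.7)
The plan is to identify $\dim A^{(1)}$ with the dimension of a certain space of symmetric arrays, and then obtain the bound by building an element of this space up one column at a time along a generic flag in $J_x/I_x$. By Proposition~\ref{5.7.1}, $A^{(1)}$ consists of replacements $\widetilde{\bpi}^a{}_i \equiv \bpi^a{}_i \pmod{\bth}$ for which $d\bth^a \equiv \widetilde{\bpi}^a{}_i \wedge \bom^i \pmod{\bth}$ still holds. Writing $\widetilde{\bpi}^a{}_i - \bpi^a{}_i \equiv -\sum_j B^a{}_{ij}\,\bom^j \pmod{\bth}$, the structure equation forces $B^a{}_{ij} = B^a{}_{ji}$. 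Hence $A^{(1)}$ is in bijection with the set of arrays $(B^a{}_{ij})$ that are symmetric in the lower indices and have the property that, for each fixed $k$, the matrix $P_k := (B^a{}_{ik})_{a,i}$ is the coordinate representation of an element of the tableau $A_x$. Viewing each $P_k$ as a linear map $J_x/I_x \to I_x^*$, the symmetry $B^a{}_{ij} = B^a{}_{ji}$ becomes the compatibility $P_k(e_j) = P_j(e_k)$, where $\{e_i\}$ is dual to a chosen generic basis $\{\bom^i\}$.

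Fix such a generic basis and set $V_k := \spn(e_1, \ldots, e_k)$. Define
\[
A_{x,k} := \left\{ Q \in A_x \,:\, Q|_{V_k} = 0 \right\}.
\]
For a generic basis, the reduced Cartan characters are given by $s'_k = \dim A_{x,k-1} - \dim A_{x,k}$, from which
\[
\dim A_{x,k-1} = s'_k + s'_{k+1} + \cdots + s'_n.
\]
Now build the sequence $(P_1, \ldots, P_n)$ inductively. Suppose $P_1, \ldots, P_{k-1}$ have been chosen and satisfy all previous compatibilities. Then the symmetry condition forces $P_k(e_j) = P_j(e_k)$ for each $j < k$; that is, $P_k$ is an element of $A_x$ whose restriction to $V_{k-1}$ is prescribed. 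The set of admissible $P_k$, if non-empty, is therefore an affine subspace of $A_x$ modelled on $A_{x,k-1}$, and so contributes at most $\dim A_{x,k-1}$ parameters.

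Summing the contributions step by step gives
\[
\dim A^{(1)} \;\le\; \sum_{k=1}^{n} \dim A_{x,k-1} \;=\; \sum_{k=1}^{n} \left( s'_k + s'_{k+1} + \cdots + s'_n \right) \;=\; \sum_{j=1}^{n} j\,s'_j,
\]
where the last equality follows by noting that $s'_j$ appears in the inner sum exactly once for each $k \in \{1, \ldots, j\}$. This establishes the inequality. The main subtlety, which accounts for the possibility of strict inequality, is that at step $k$ the prescribed values $\bigl(P_j(e_k)\bigr)_{j<k}$ need not lie in the image of the restriction map $A_x \to \mathrm{Hom}(V_{k-1}, I_x^*)$, and these obstructions may force the admissible affine space to be strictly smaller than $A_{x,k-1}$ — or even empty. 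Involutivity is precisely the statement that no such obstruction ever arises, so that the maximum freedom is attained at every step and the inequality is sharp.
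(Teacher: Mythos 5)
The paper does not prove this proposition at all: it is quoted verbatim from the cited reference (\cite{BCGGG}, p.~318) as background material in the appendix, so there is no in-paper argument to compare against. Your proof is essentially the standard one from that reference: identify $A^{(1)}$ with the space of arrays $B^a{}_{ij}$, symmetric in $i,j$, whose slices $P_k = B(\,\cdot\,, e_k)$ lie in the tableau, fix a generic flag so that $s'_k = \dim A_{x,k-1} - \dim A_{x,k}$, and bound $\dim A^{(1)}$ by the filtration $A^{(1)}_k = \{B : B(e_i,\cdot)=0,\ i\le k\}$, for which $\dim\bigl(A^{(1)}_{k-1}/A^{(1)}_k\bigr) \le \dim A_{x,k-1}$; summing gives $\sum_j j\,s'_j$. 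Two small points are worth tightening. First, when you pass from Proposition~\ref{5.7.1} to the slice condition ``each $P_k$ lies in $A_x$'', you should say explicitly that the admissible replacements are of the form $\widetilde{\bpi}{}^{\varepsilon} = {\bpi}^{\varepsilon} + p^{\varepsilon}{}_j\,{\bom}^j$ in the genuinely independent forms ${\bpi}^{\varepsilon}$, so that $B^a{}_{ij} = \sum_\varepsilon A^a{}_{\varepsilon i}\,p^{\varepsilon}{}_j$, which is exactly what forces each slice into the tableau (this is also where $\dim A^{(1)} = r^{(1)}$ comes from); Cartan's lemma then gives the symmetry, as you say. Second, the final clause of the statement is not something your argument needs to establish: in \cite{BCGGG} (and in the way the paper uses it) equality in this inequality \emph{is} the definition of involutivity of the tableau, so your closing gloss about ``no obstruction ever arising'' is a correct heuristic for why equality is the right notion, but it is not a proof obligation. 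With those caveats, your inequality argument is correct and is the same route the cited source takes.
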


\begin{proposition}[\cite{BCGGG}, Chapter~IV, Theorem~5.16]
\label{5.16}
The linear Pfaffian system $(\mathcal{I}, \bOm)$ is involutive at $x \in M$
if and only if
\begin{itemize}
\item[(i)] $[c](x) = 0$;
\item[(ii)] the tableau $A$ is involutive.
\end{itemize}
\end{proposition}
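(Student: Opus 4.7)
The plan is to prove the two implications separately, using the intermediate results already established in this appendix. The key insight is that for a linear Pfaffian system, once the essential torsion has been absorbed, the entire integrability story is controlled algebraically by the tableau $A_x$, so involutivity of the system and involutivity of the tableau should correspond directly.

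First I would establish necessity of (i): involutivity of $(\mathcal{I}, \bOm)$ at $x$ requires at a minimum that some integral element of $(\mathcal{I}, \bOm)$ based at $x$ exists, and it was noted in the discussion following~\eqref{lPs} that such an integral element exists if and only if $[c](x) = 0$. Assuming (i), I would locally absorb torsion and rewrite the structure equations in the reduced form~\eqref{lPs2}. One can then describe $V_n(\mathcal{I}, \bOm)$ near $x$ as an affine subbundle of $\mathrm{Gr}_n(TM, \bOm)$ cut out by the linear equations coming from the tableau; its vertical tangent space at any integral element $E_n$ can be identified, via Proposition~\ref{5.7.1}, with the first prolongation $A^{(1)}$. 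This yields $\codim_{E_n} V_n(\mathcal{I}, \bOm)$ in closed form in terms of $\dim A$ and $\dim A^{(1)}$.

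Next, for a generic integral flag $(0) \subset E_1 \subset \ldots \subset E_n$ sitting inside $E_n$, I would compute the polar-space codimensions $c_k$. A direct calculation in an adapted coframe --- using that the only relevant $2$-forms in $\mathcal{I}$ modulo $\bth$ come from the $d\bth^a$ of~\eqref{lPs2} --- expresses each $c_k$ in terms of $s_0$ and the reduced Cartan characters $s_1', \ldots, s_k'$, and hence writes $\sum_{k=0}^{n-1} c_k$ as an explicit linear combination of the $s_j'$. Combining this with the formula for $\codim_{E_n} V_n$ above, Cartan's test --- i.e.\ equality in the codimension inequality quoted earlier in this appendix --- translates into the identity $\dim A^{(1)} = s_1' + 2 s_2' + \ldots + n s_n'$, which is precisely the statement that the tableau is involutive.

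The main obstacle is the linear-algebraic bookkeeping in the previous paragraph: verifying both the identification of $A^{(1)}$ with the vertical tangent to $V_n$ at $E_n$, and the formula for the $c_k$ of a generic flag in terms of the reduced Cartan characters. Each requires choosing a sufficiently generic adapted basis of $T_x M$ and carefully tracking the ranks of the maps induced by the coefficients $A^a{}_{\varepsilon i}$ of~\eqref{lPs}. The combinatorics is standard (cf.\ Chapter~IV of~\cite{BCGGG}) but technical, and ensuring that a flag with the required generic behaviour actually exists at $x$ --- so that Cartan's test can be applied at all --- adds a further layer of care.
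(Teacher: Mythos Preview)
The paper does not prove this proposition at all: it is stated in Appendix~\ref{app:XDS} purely as a background result, quoted verbatim from \cite{BCGGG}, Chapter~IV, Theorem~5.16, with no accompanying proof. There is therefore nothing in the paper against which to compare your argument.

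That said, your sketch is a faithful outline of the standard proof as it appears in \cite{BCGGG}: identify $V_n(\mathcal{I},\bOm)$ locally as an affine bundle with vertical tangent $A^{(1)}$, compute the polar codimensions $c_k$ of a generic flag in terms of $s_0$ and the reduced characters $s'_1,\dots,s'_k$, and observe that Cartan's test for the flag collapses to the equality $\dim A^{(1)} = s'_1 + 2s'_2 + \cdots + n s'_n$. The only comment is that you have correctly flagged the genuinely delicate step --- the existence of a \emph{generic} flag realising the reduced characters, and the precise translation between the polar-space ranks and the tableau characters --- as ``standard but technical''; in a self-contained write-up this is where the work lies, and your proposal as written is a plan rather than a proof. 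But since the paper itself defers entirely to the reference, there is no discrepancy to report.
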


\begin{remark}
\label{PfaffianCK}
If the system $(\mathcal{I}, \bOm)$ is involutive at $x \in M$, then the
Cartan--K\"{a}hler theorem implies the existence of an integral manifold of
the system $(\mathcal{I}, \bOm)$ through the point $x$.
\end{remark}

\section{Absorption formulae}

\subsection{Explicit absorption procedures}
\label{sec:absorb}
The structure equations for the Pfaffian system $(\mathcal{I}, {\bOm})$ on
the manifold $M^{(1)}$ are given in equation~\eqref{dtheta}. We can absorb
most of the torsion in the original problem by setting
\begin{align*}
\boldpi{1}{1}{2} &= d\llambda{1}{1}{2} + T^1{}_{212} \, {\bom}^2 + T^1{}_{213} \, {\bom}^3 + T^1{}_{214} \, {\bom}^4,
\\
\boldpi{2}{1}{2} &= d\llambda{2}{1}{2} + T^1{}_{223} \, {\bom}^3 + T^1{}_{224} \, {\bom}^4,
\\
\boldpi{3}{1}{2} &= d\llambda{3}{1}{2} + T^1{}_{234} \, {\bom}^4,
\\
\boldpi{4}{1}{2} &= d\llambda{4}{1}{2},
\\
\boldpi{1}{1}{3} &= d\llambda{1}{1}{3} + T^1{}_{312} \, {\bom}^2 + T^1{}_{313} \, {\bom}^3 + T^1{}_{314} \, {\bom}^4,
\\
\boldpi{2}{1}{3} &= \boldpi{1}{2}{3} = d\llambda{2}{1}{3} + T^1{}_{324} \, {\bom}^4,
\\
\boldpi{3}{1}{3} &= d\llambda{3}{1}{3} - T^1{}_{323} \, {\bom}^2 + T^1{}_{334} \, {\bom}^4,
\\
\boldpi{4}{1}{3} &= {\bpi}_3{}^1{}_4 = d\llambda{4}{1}{3},
\\
\boldpi{1}{1}{4} &= d\llambda{1}{1}{4} + T^1{}_{412} \, {\bom}^2 + T^1{}_{413} \, {\bom}^3 + T^1{}_{414} \, {\bom}^4,
\\
\boldpi{2}{1}{4} &= \boldpi{1}{2}{4} = d\llambda{2}{1}{4} + T^1{}_{423} \, {\bom}^3,
\\
\boldpi{4}{1}{4} &= d\llambda{4}{1}{4} - T^1{}_{424} \, {\bom}^2 - T^1{}_{434} \, {\bom}^3,
\\
\boldpi{2}{2}{3} &= d\llambda{2}{2}{3} - T^2{}_{312} \, {\bom}^1 + T^2{}_{323} \, {\bom}^3 + T^2{}_{324} \, {\bom}^4,
\\
\boldpi{3}{2}{3} &= d\llambda{3}{2}{3} - T^2{}_{313} \, {\bom}^1 + T^2{}_{334} \, {\bom}^4,
\\
\boldpi{4}{2}{3} &= \boldpi{3}{2}{4} = d\llambda{4}{2}{3} + \left( T^1{}_{324} + T^2{}_{341} \right) \, {\bom}^1,
\\
\boldpi{2}{2}{4} &= d\llambda{2}{2}{4} - T^2{}_{412} \, {\bom}^1 + T^2{}_{423} \, {\bom}^3 + T^2{}_{424} \, {\bom}^4,
\\
\boldpi{4}{2}{4} &= d\llambda{4}{2}{4} - T^2{}_{414} \, {\bom}^1 - T^2{}_{434} \, {\bom}^3,
\\
\boldpi{1}{3}{4} &= d\llambda{1}{3}{4} + T^3{}_{412} \, {\bom}^2 + T^3{}_{413} \, {\bom}^3 + T^3{}_{414} \, {\bom}^4,
\\
\boldpi{2}{3}{4} &= d\llambda{2}{3}{4} + T^3{}_{423} \, {\bom}^3 + T^3{}_{424} \, {\bom}^4,
\\
\boldpi{3}{3}{4} &= d\llambda{3}{3}{4} + T^3{}_{434} \, {\bom}^4,
\\
\boldpi{4}{3}{4} &= d\llambda{4}{3}{4}.
\end{align*}

The structure equations then take the form given in
equation~\eqref{dbth}. Note that the quantity on the left-hand-side of
equation~\eqref{Tnot0} is invariant under transformations of the form
$\boldpi{a}{b}{c} \rightarrow \boldpi{a}{b}{c} + \delta \boldpi{a}{b}{c}$
with $\delta \boldpi{a}{b}{c} = \sum_d \Pi_a{}^b{}_{cd} \bom^d$ that
preserve the required symmetries of the $\boldpi{a}{b}{c}$
(i.e. $\boldpi{1}{2}{3} = \boldpi{2}{1}{3}$). As such, it follows that, at
points of $M^{(1)}$ at which $T(x, g, \lambda) \neq 0$, there remains
essential torsion in the system that cannot be absorbed into a redefinition
of the $1$-forms $\boldpi{a}{b}{c}$.

\subsection{Calculation of degree of indeterminacy}
\label{sec:absorb2}
We let $\mathbf{X} := \left( y^1, \dots, y^8 \right) \in \mathbb{R}^{4, 4}$
with the split-signature metric
\[
\mathbf{q}(\mathbf{X}, \mathbf{X}) := 2 \left( y^1 y^2 - y^3 y^4 + y^5 y^6 - y^7 y^8 \right).
\]
Then our constraint equation~\eqref{constraint1} takes the
\begin{equation}
T(x, g, \mathbf{X}) := \mathbf{q}(\mathbf{X}, \mathbf{X}) + R_{1234}(x, g) = 0.
\label{constraint1b}
\end{equation}
We then need to consider the pull-back to $S$ of the exterior derivative of $T$, and find that
\begin{equation}
i^* (dT) = \widetilde{y}^1 \widetilde{\bpi}{}^2 + \widetilde{y}^2 \widetilde{\bpi}{}^1 - \widetilde{y}^3 \widetilde{\bpi}{}^4 - \widetilde{y}^4 \widetilde{\bpi}{}^3 + \widetilde{y}^5 \widetilde{\bpi}{}^6 + \widetilde{y}^6 \widetilde{\bpi}{}^5 - \widetilde{y}^7 \widetilde{\bpi}{}^8 - \widetilde{y}^8 \widetilde{\bpi}{}^7 + \sum_a \Psi_a \widetilde{\bom}^a \equiv 0 \mod{ {\widetilde{\bth}}}.
\label{lindep3}
\end{equation}
Note that the $1$-forms $\{ {\widetilde{\bpi}}^{\alpha},
{\widetilde{\bro}}^a, {\widetilde{\bmu}}^a, {\widetilde{\bnu}}^a \}$ are
not uniquely determined by the structure
equations~\eqref{structureequations} and~\eqref{tableau2}. In particular,
we are free to consider variations of the form
\begin{subequations}
\begin{align}
\widetilde{\bpi}{}^{\alpha} &\mapsto \widetilde{\bpi}{}^{\alpha} + \delta \widetilde{\bpi}{}^{\alpha},
&\widetilde{\bro}{}^i &\mapsto \widetilde{\bro}{}^i + \delta \widetilde{\bro}{}^i,
\label{pirho}
\\
{\widetilde{\bmu}}^a &\mapsto {\widetilde{\bmu}}^a + \delta{\widetilde{\bmu}}^a,
&{\widetilde{\bnu}}^a &\mapsto {\widetilde{\bnu}}^a + \delta{\widetilde{\bnu}}^a
\label{pirho2}\end{align}\end{subequations}
with
\begin{equation}
\delta\widetilde{\bpi}{}^{\alpha}, \delta\widetilde{\bro}{}^a, \delta{\widetilde{\bmu}}^a, \delta{\widetilde{\bnu}}^a \equiv 0 \mod{\widetilde{\bom}{}^a},
\label{pirho3}
\end{equation}
as long as they preserve~\eqref{structureequations}
and~\eqref{tableau2}. We first wish to show that, in the generic case where
$\widetilde{y}^1, \dots, \widetilde{y}^8$ are all non-zero, we may use such
transformations to absorb the $\sum_a \Psi_a \widetilde{\bom}^a$ term
in~\eqref{lindep3} into a redefinition of the $1$-forms
$\widetilde{\bpi}{}^{\alpha}$, $\widetilde{\bro}{}^i$.

Firstly, it is straightforward to show that the most general variation that
preserves the structure equations~\eqref{structureequations}
and~\eqref{tableau} is of the form (from now on, we drop tildes on all
quantities)
\begin{align*}
\delta {\bpi}{}^1 &= \alpha {\bom}^1 + \beta {\bom}^2 + \gamma {\bom}^3 + \delta {\bom}^4,
\\
\delta {\bpi}{}^3 &= \epsilon {\bom}^1 + \zeta {\bom}^2 + \delta {\bom}^3 + \eta {\bom}^4,
\\
\delta {\bpi}{}^5 &= \theta {\bom}^1 + \delta {\bom}^2 + \iota {\bom}^3 + \kappa {\bom}^4,
\\
\delta {\bpi}{}^7 &= \delta {\bom}^1 + \lambda {\bom}^2 + \mu {\bom}^3 + \nu {\bom}^4,
\end{align*}
along with
\begin{align*}
\delta {\bpi}{}^2 &= \xi {\bom}^1 + o {\bom}^2 + \frac{1}{2} \left( \lambda - \theta \right) {\bom}^3 + \frac{1}{2} \left( \pi - \rho \right) {\bom}^4,
\\
\delta {\bpi}{}^4 &= \sigma {\bom}^1 + \tau {\bom}^2 + \frac{1}{2} \left( \upsilon - \phi \right) {\bom}^3 + \frac{1}{2} \left( \lambda - \theta \right) {\bom}^4,
\\
\delta {\bpi}{}^6 &= \frac{1}{2} \left( \gamma - \eta \right) {\bom}^1 + \frac{1}{2} \left( \upsilon - \pi \right) {\bom}^2 + \chi {\bom}^3 + \psi {\bom}^4,
\\
\delta {\bpi}{}^8 &= \frac{1}{2} \left( \phi - \rho \right) {\bom}^1 + \frac{1}{2} \left( \gamma - \eta \right) {\bom}^2 + \omega{\bom}^3 + \Omega {\bom}^4,
\end{align*}
and
\begin{align*}
\delta {\bro}{}^1 &= (\zeta - \xi) {\bom}^1 + (o + \epsilon) {\bom}^2 + \frac{1}{2} \left( \lambda + \theta \right) {\bom}^3 + \frac{1}{2} \left( \pi + \rho \right) {\bom}^4,
\\
\delta {\bro}{}^2 &= (\beta - \sigma) {\bom}^1 + (\tau + \alpha) + \frac{1}{2} \left( \upsilon + \phi \right) {\bom}^3 + \frac{1}{2} \left( \lambda + \theta \right) {\bom}^4,
\\
\delta {\bro}{}^3 &= - \frac{1}{2} \left( \gamma + \eta \right) {\bom}^1 - \frac{1}{2} \left( \upsilon + \pi \right) {\bom}^2 - (\chi + \nu) {\bom}^3 - (\phi + \mu) {\bom}^4,
\\
\delta {\bro}{}^4 &= - \frac{1}{2} \left( \phi + \rho \right) {\bom}^1 - \frac{1}{2} \left( \gamma + \eta \right) {\bom}^2 - (\omega + \kappa) {\bom}^3 - (\delta - \Omega) {\bom}^4,
\end{align*}
where $\alpha, \dots, \omega$ and $\Omega$ are $25$ free parameters. We now wish to find a transformation of the form~\eqref{pirho} with the property that
\[
y^1 \delta{\bpi}{}^2 + y^2 \delta{\bpi}{}^1 - y^3 \delta{\bpi}{}^4 - y^4 \delta{\bpi}{}^3 + y^5 \delta{\bpi}{}^6 + y^6 \delta{\bpi}{}^5 - y^7 \delta{\bpi}{}^8 - y^8 \delta{\bpi}{}^7 = - \sum_a \Psi_a {\bom}^a.
\]
Using the form of $\delta{\bpi}{}^{\alpha}$ given above, this implies that we need to find vectors $\mathbf{Y}_1, \dots, \mathbf{Y}_4$ of the form
\begin{align*}
\mathbf{Y}_1 &= \left( \alpha, \xi, \epsilon, \sigma, \theta, \frac{1}{2} \left( \gamma - \eta \right), \delta, \frac{1}{2} \left( \phi - \rho \right) \right),
\\
\mathbf{Y}_2 &= \left( \beta, o, \zeta, \tau, \delta, \frac{1}{2} \left( \phi - \rho \right), \lambda, \frac{1}{2} \left( \gamma - \eta \right) \right),
\\
\mathbf{Y}_3 &= \left( \gamma, \frac{1}{2} \left( \lambda - \theta \right), \delta, \frac{1}{2} \left( \upsilon - \phi \right), \iota, \chi, \mu, \omega \right),
\\
\mathbf{Y}_4 &= \left( \delta, \frac{1}{2} \left( \pi - \rho \right), \eta, \frac{1}{2} \left( \lambda - \theta \right), \kappa, \psi, \nu, \Omega \right),
\end{align*}
with the property that
\begin{equation}
\mathbf{q}(\mathbf{X}, \mathbf{Y}_i) = - \Psi_i, \qquad i = 1, \dots, 4.
\label{Yeqns}
\end{equation}
In the generic case where $y^1, \dots, y^8$ are all non-zero, these
equations may be solved for four of the free parameters in the
$\mathbf{Y}_i$, and hence will yield the required
transformation~\eqref{pirho} in terms of the remaining $21$ free
parameters. Substituting these expressions into $\delta{\bpi}^{\alpha}$, we
therefore generate a $21$-parameter family of $1$-forms
${{\bpi}^{\prime}}{}^{\alpha} := {\bpi}{}^{\alpha} + \delta
{\bpi}{}^{\alpha}$, ${{\bro}^{\prime}}{}^i := {\bro}{}^i + \delta
{\bro}{}^i$ in terms of which the constraint equation~\eqref{lindep3} takes
the required form
\begin{equation}
y^1 {{\bpi}^{\prime}}{}^2 + y^2 {{\bpi}^{\prime}}{}^1 - y^3 {{\bpi}^{\prime}}{}^4 - y^4 {{\bpi}^{\prime}}{}^3 + y^5 {{\bpi}^{\prime}}{}^6 + y^6 {{\bpi}^{\prime}}{}^5 - y^7 {{\bpi}^{\prime}}{}^8 - y^8 {{\bpi}^{\prime}}{}^7 \equiv 0 \mod{ {{\bth}}}.
\label{lindep4}
\end{equation}

\vs Finally, based on the preceding calculations, we deduce
Proposition~\ref{prop41}:
\begin{proof}[Proof of Proposition~\ref{prop41}]
Since we are dealing with a linear Pfaffian system, the first prolongation
of $A_p$ is necessarily an affine-linear space (cf.~\cite{BCGGG},
Chapter~IV) the dimension of which, by Proposition~\ref{5.7.1}, is equal to
$r^{(1)}$, the degree of indeterminacy of our coframe. By definition,
$r^{(1)}$ is equal to the number of parameters in a change of the $1$-forms
as in equations~\eqref{pirho}, \eqref{pirho2} and~\eqref{pirho3} that
preserve the form of the structure equations~\eqref{structureequations}
and~\eqref{tableau}. Setting $\Psi_a = 0$ in the calculations above, we see
that there exists a $21$-parameter family of $1$-forms,
$\delta{\widetilde{\bpi}}^\alpha, \delta{\widetilde{\bro}}^a$ on $S$ that
satisfy these conditions. In addition, we have $10$ free parameters in the
choice of $\delta{\widetilde{\bmu}}^a$ and $10$ free parameters in the
choice of $\delta{\widetilde{\bnu}}^a$ consistent with the structure
equations. In total, therefore, at a generic point $p \in S$, we have $41$
free parameters in choosing the $1$-forms in a way that is consistent with
the structure equations.

Therefore $\dim A^{(1)} = r^{(1)} = 41$, as required.
\end{proof}

\newcommand{\etalchar}[1]{$^{#1}$}
\providecommand{\bysame}{\leavevmode\hbox to3em{\hrulefill}\thinspace}
\providecommand{\MR}{\relax\ifhmode\unskip\space\fi MR }
\providecommand{\MRhref}[2]{%
  \href{http://www.ams.org/mathscinet-getitem?mr=#1}{#2}
}
\providecommand{\href}[2]{#2}

\end{document}